\theoremstyle{plain}
\newtheorem{theorem}{Theorem}[section]
\newtheorem{maintheorem}{Theorem}
\newtheorem{proposition}[theorem]{Proposition}
\newtheorem{lemma}[theorem]{Lemma}
\newtheorem{corollary}[theorem]{Corollary}
\theoremstyle{definition}
\newtheorem{definition}[theorem]{Definition}
\newtheorem{remark}[theorem]{Remark}
\newcommand{\nc}{\newcommand}
\nc{\dmo}{\DeclareMathOperator}
\nc{\OO}{\mathcal{O}}
\nc{\cA}{\mathcal{A}}
\nc{\cB}{\mathcal{B}}
\nc{\sB}{\mathscr{B}}
\nc{\C}{\mathbb{C}}
\nc{\cC}{\mathcal{C}}
\nc{\sC}{\mathscr{C}}
\nc{\BB}{\mathbb{B}}
\nc{\LL}{\mathcal{L}}
\nc{\bd}{\mathbf{d}}
\nc{\DD}{\mathbb{D}}
\nc{\cD}{\mathcal{D}}
\nc{\sD}{\mathscr{D}}
\nc{\bF}{\mathbb{F}}
\nc{\cF}{\mathcal{F}}
\nc{\cG}{\mathcal{G}}
\nc{\cI}{\mathcal{I}}
\nc{\cK}{\mathcal{K}}
\nc{\cL}{\mathcal{L}}
\nc{\cM}{\mathcal{M}}
\nc{\bM}{\mathbf{M}}
\nc{\N}{\mathbb{N}}
\nc{\cN}{\mathcal{N}}
\nc{\cO}{\mathcal{O}}
\nc{\bp}{\mathbf{p}}
\nc{\bP}{\mathbb{P}}
\nc{\cP}{\mathcal{P}}
\nc{\Q}{\mathbb{Q}}
\nc{\R}{\mathbb{R}}
\nc{\cS}{\mathcal{S}}
\nc{\cT}{\mathcal T}
\nc{\cU}{\mathcal U}
\nc{\cX}{\mathcal{X}}
\nc{\cY}{\mathcal{Y}}
\nc{\Z}{\mathbb{Z}}
\nc{\disk}{\mathbb{D}}
\nc{\hyp}{\mathbb{H}}
\nc{\CP}{\mathbb{CP}}
\nc{\RP}{\mathbb{RP}}
\dmo{\Mod}{Mod}
\dmo{\PMod}{PMod}
\dmo{\LMod}{LMod}
\dmo{\Diff}{Diff}
\dmo{\Homeo}{Homeo}
\dmo{\dist}{dist}
\dmo\BDiff{BDiff}
\dmo\SO{SO}
\dmo\Hom{Hom}
\dmo\SL{SL}
\dmo\rank{rank}
\dmo\sig{sig}
\dmo\Out{Out}
\dmo\Aut{Aut}
\dmo\Inn{Inn}
\dmo\GL{GL}
\dmo\PGL{PGL}
\dmo\Gr{Gr}
\dmo\PSL{PSL}
\dmo\BHomeo{BHomeo}
\dmo\EHomeo{EHomeo}
\dmo\EDiff{EDiff}
\dmo\Disc{Disc}
\dmo\Aff{Aff}
\dmo\Spin{Spin}
\renewcommand{\Re}{\operatorname{Re}}
\renewcommand{\bar}{\overline}
\dmo\Teich{Teich}
\dmo\Fix{Fix}
\nc{\pair}[1]{\ensuremath{\left\langle #1 \right\rangle}}
\nc{\abs}[1]{\ensuremath{\left| #1 \right|}}
\nc{\action}{\circlearrowright}
\nc{\norm}[1]{\ensuremath{\left | \left | #1 \right | \right |}}
\nc{\abcd}[4]{\ensuremath{\left(\begin{array}{cc} #1 & #2 \\ #3 & #4 \end{array}\right)}}
\dmo{\Isom}{Isom}
\nc{\normal}{\vartriangleleft}
\dmo{\Vol}{Vol}
\dmo{\im}{Im}
\dmo{\Push}{Push}
\dmo{\Conf}{Conf}
\dmo{\UConf}{UConf}
\dmo{\PConf}{PConf}
\dmo{\Poly}{Poly}
\dmo{\PB}{PB}
\dmo{\id}{id}
\dmo{\Jac}{Jac}
\dmo{\Pic}{Pic}
\dmo{\Stab}{Stab}
\dmo{\Arf}{Arf}
\dmo{\End}{End}
\dmo{\Gal}{Gal}
\dmo{\lcm}{lcm}
\dmo{\ab}{ab}
\dmo{\opp}{op}
\dmo{\SU}{SU}
\dmo{\Sp}{Sp}
\dmo{\OT}{\Omega \mathcal{T}}
\dmo{\OM}{\Omega \mathcal{M}}
\dmo{\PH}{\mathbb{P}\mathcal{H}}
\dmo{\spin}{spin}
\dmo{\even}{even}
\dmo{\odd}{odd}
\dmo{\comp}{\mathcal{H}}
\dmo{\Mgk}{\mathcal{M}_{g, \underline{\kappa}}}
\dmo{\orb}{orb}
\dmo{\AJ}{AJ}
\dmo{\Ck}{\mathsf{C}(\underline{\kappa})}
\dmo{\Int}{Int}
\dmo{\pr}{pr}
\dmo{\lab}{lab}
\dmo{\Sym}{Sym}
\dmo{\Ann}{Ann}
\dmo{\Rad}{Rad}
\dmo{\Ind}{Ind}
\dmo{\Div}{Div}
\dmo{\Res}{Res}
\dmo{\Hur}{Hur}
\dmo{\vcd}{vcd}
\nc{\Span}[1]{\operatorname{Span}(#1)}
\renewcommand{\epsilon}{\varepsilon}
\renewcommand{\le}{\leqslant}
\nc{\margin}[1]{\marginpar{\scriptsize #1}}
\nc{\para}[1]{\medskip\noindent\textbf{#1.}}
\definecolor{myblue}{RGB}{102,153, 255}
\definecolor{myred}{RGB}{204,0,0}
\definecolor{mygreen}{RGB}{0,204,0}
\definecolor{myorange}{RGB}{255,102,0}
\definecolor{mypurple}{RGB}{138,43,226}
\definecolor{myyellow}{RGB}{255,204,0}
\nc{\red}[1]{\textcolor{myred}{#1}}
\nc{\blue}[1]{\textcolor{myblue}{#1}}
\nc{\discL}{\abs{L}\setminus \sD_L}
\author{Nick Salter}
\address{Department of Mathematics, University of Notre Dame, 255 Hurley Building, Notre Dame, IN 46556}
\email{nsalter@nd.edu}
\date{October 3, 2024}
\title[Monodromy kernels for discriminant complements]{Topological monodromy kernels for fundamental groups of discriminant complements}
\begin{document}
\begin{abstract}
    A linear system on a smooth complex algebraic surface gives rise to a family of smooth curves in the surface. Such a family has a {\em topological monodromy representation} valued in the mapping class group of a fiber. Extending arguments of Kuno, we show that if the image of this representation is of finite index, then the kernel is infinite. This applies in particular to linear systems on smooth toric surfaces and on smooth complete intersections. In the case of plane curves, we extend the techniques of Carlson--Toledo to show that the kernel is quite rich (e.g. it contains a nonabelian free group).
\end{abstract}

\maketitle

\section{Introduction}

Let $L$ be a very ample line bundle on a smooth complex projective surface $X$, and let $\abs{L}$ denote the associated complete linear system. The discriminant hypersurface $\sD_L \subset \abs{L}$ parametrizes the non-smooth curves in $\abs{L}$; consequently there is a family $\cX_L$ of smooth curves over $\discL$. Any such family carries a {\em topological monodromy representation}
\[
\rho_{top}: \pi_1(\discL) \to \Mod(C),
\]
where $C$ is a generic smooth curve in $\abs{L}$ and $\Mod(C)$ denotes its mapping class group, i.e. the group of isotopy classes of orientation-preserving diffeomorphisms. 

The groups $\pi_1(\discL)$ are in general poorly-understood, but are expected to have a rich theory, being higher-dimensional analogues of Artin's braid group. $\rho_{top}$ provides a window into their structure. The image of $\rho_{top}$ has been computed in several cases: toric surfaces \cite{CL1, CL2, saltertoric}, complete intersection surfaces \cite{nickishan}, as well as in the local setting of versal deformations of isolated plane curve singularities \cite{nickpablo}. 

In light of these results, it is natural to ask whether $\rho_{top}$ is injective. In the case of plane curve singularities, this was posed by Sullivan in the 1970's. Wajnryb \cite{wajnryb} showed that the topological monodromy of the $E_6$ singularity is not injective; this was subsequently extended in \cite{nickpablo} to obtain the same result for all isolated plane curve singularities of genus $g \ge 7$ not of type $A_n$ or $D_n$.

In the ``global'' setting of linear systems on smooth projective surfaces, much less is known. Kuno \cite{kuno} showed that in the case of plane quartic curves, $\rho_{top}$ has infinite kernel, and this was later computed precisely by R. Harris \cite{reid}. Our first main result shows that Kuno's methods can be applied in a broad range of settings.

\begin{maintheorem}\label{main:infinite}
    Let $X, L, \sD_L, C$ be as above. If the genus of $C$ is at least three and the image of the topological monodromy representation $\rho_{top}: \pi_1(\abs{L} \setminus \sD_L) \to \Mod(C)$ is of finite index in $\Mod(C)$, then $\ker(\rho_{top})$ is infinite.

    In particular, this holds for $L$ any ample line bundle on a smooth toric surface for which the generic section is non-hyperelliptic and of genus $g \ge 5$, and for $L = \cO(d)$ on any smooth complete intersection surface $X$, for any $d > 0$ (so long as $g(S) \ge 3$).
\end{maintheorem}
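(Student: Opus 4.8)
The plan is to follow Kuno's strategy for plane quartics, which exploits the relationship between the topological monodromy $\rho_{top}$ and the "algebraic" monodromy recorded by the action on cohomology, and to run it abstractly under the finite-index hypothesis. Let $\Gamma = \im(\rho_{top}) \le \Mod(C)$, which by hypothesis has finite index. The key point is that $\rho_{top}$ factors as $\pi_1(\discL) \to \Mod(C)$, and composing with $\Mod(C) \to \Sp(2g,\Z)$ gives the homological monodromy $\rho_{hom}$. One knows (from the cited computations, or from Beauville-type results on Lefschetz pencils of curves of genus $\ge 3$) that $\rho_{hom}$ has image of finite index in $\Sp(2g,\Z)$ whenever $\Gamma$ does; in particular the image is a lattice, hence finitely generated, and (for $g\ge 3$) one can appeal to rigidity/congruence-type input. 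The first step is therefore to reduce the problem to showing that the composite $\pi_1(\discL) \xrightarrow{\rho_{top}} \Gamma \to \Gamma/\Gamma'$ (or a suitable finite quotient) cannot be injective — more precisely, to compare "sizes" via an Euler-characteristic or cohomological-dimension count.

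The heart of Kuno's argument is a numerical inequality. On one side, $\discL$ is a smooth affine variety of complex dimension $N = \dim\abs{L}$, so $\pi_1(\discL)$ has cohomological dimension at most $N$; moreover, because $\discL$ is affine of dimension $N$, one gets control on its homology — in Kuno's case this is packaged as a bound on $\dim H_1$ or on a Morse-theoretic count. On the other side, a finite-index subgroup $\Gamma \le \Mod(C)$ has virtual cohomological dimension $4g-5$ (Harer), and more to the point its rational cohomology / homological finiteness properties are well understood. If $\rho_{top}$ were injective, $\pi_1(\discL)$ would be (virtually) a subgroup of $\Mod(C)$ of finite index, hence would itself have $\vcd = 4g-5$ and would satisfy the cohomological constraints of a finite-index subgroup of $\Mod(C)$ — in particular it would be a virtual duality group of the expected dimension. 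The second step is to derive a contradiction from this: either $N = \dim\abs{L} \ne 4g-5$ outright (a dimension count using $L$ very ample and adjunction to compute $g$ in terms of $L^2$ and $L\cdot K_X$, versus $N = \chi(L) - 1$ by Riemann–Roch on the surface), or, when the dimensions happen to coincide, one uses a finer invariant — Kuno uses the fact that $\pi_1(\discL)$ surjects onto a braid-like group / has large abelianization incompatible with that of a finite-index subgroup of $\Mod(C)$, which has finite abelianization for $g\ge 3$ (by Powell/Mumford–Harer). Actually the cleanest route, and the one I expect the paper takes: $\pi_1(\discL)$ admits a surjection to $\Z$ (coming from the degree of a curve meeting $\sD_L$, i.e. $H_1(\discL) \twoheadrightarrow \Z$ since $\sD_L$ is an irreducible hypersurface), whereas any finite-index $\Gamma \le \Mod(C)$ has $H_1(\Gamma;\Q) = 0$ for $g \ge 3$; so $\rho_{top}$ cannot be injective onto a finite-index subgroup, and a short further argument (if the image has finite index, an injective $\rho_{top}$ would force a finite-index subgroup of $\pi_1(\discL)$ to have trivial rational $H_1$, contradicting that $H_1(\discL;\Q)\ne 0$ is inherited by finite covers here, or directly that $\Z$ has no finite-index subgroup with trivial $H_1\otimes\Q$). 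Hence $\ker(\rho_{top}) \ne 1$, and since $\Mod(C)$ is residually finite and $\pi_1(\discL)$ is finitely generated, a nontrivial normal subgroup of infinite co-finite-index... — rather: once the kernel is nontrivial, infiniteness follows because a finite normal subgroup of $\pi_1$ of a smooth affine variety of $\dim \ge 1$ must be trivial (such $\pi_1$ are torsion-free, or at least have no finite normal subgroups, as $\discL$ is a $K(\pi,1)$ in the relevant range / by a theorem on fundamental groups of quasiprojective varieties), so $\ker(\rho_{top})$ is infinite.

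\begin{remark}
The two applications are then a matter of verifying the finite-index hypothesis from the literature. For a smooth toric surface with non-hyperelliptic generic section of genus $g\ge 5$, the image of $\rho_{top}$ was computed in \cite{CL1, CL2, saltertoric} and shown to be finite index (indeed, it is the stabilizer of a spin or $r$-spin structure, which sits with finite index in $\Mod(C)$); the genus and non-hyperellipticity hypotheses are exactly what those theorems require. For $L = \cO(d)$ on a smooth complete intersection surface with $g(S)\ge 3$, the corresponding computation is \cite{nickishan}, where again the image is identified as a finite-index (spin-structure-stabilizing) subgroup. In both cases one then invokes \Cref{main:infinite}.
\end{remark>

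**Main obstacle.** I expect the main difficulty to be the passage from "not injective" to "infinite kernel" when $\dim\abs{L}$ is small relative to $4g-5$, and more seriously the case analysis when the naive dimension count $N = 4g-5$ does happen to hold (as it essentially does for plane quartics, Kuno's original case): there one genuinely needs the homological input ($H_1(\discL)\twoheadrightarrow\Z$ versus finite abelianization of finite-index subgroups of $\Mod(C)$ for $g\ge 3$) together with the behavior of $H_1\otimes\Q$ under finite covers, and one must ensure the hypothesis $g\ge 3$ is used precisely where Powell's theorem (perfectness of the mapping class group, hence finiteness of $H_1$ of finite-index subgroups) is invoked. The lower genus cases ($g = 2$) are excluded precisely because $\Mod(C)$ then has infinite abelianization and the argument collapses.
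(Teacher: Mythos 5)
Your proposal takes a genuinely different route from the paper, and two of its central steps fail.

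First, the paper works in the \emph{projective} setting $\abs{L}\setminus \sD_L \subset \P^N$, where $\sD_L$ is an irreducible hypersurface; there $H_1(\abs{L}\setminus\sD_L;\Z) \cong \Z/m\Z$ with $m = \deg \sD_L$, so there is no surjection onto $\Z$. (Only the affine cone complement has $H_1 \cong \Z$, and that is not the space appearing in the statement.) Second, and more seriously, the claim that ``any finite-index $\Gamma \le \Mod(C)$ has $H_1(\Gamma;\Q)=0$ for $g\ge 3$'' is not a theorem — it is precisely Ivanov's conjecture on virtual perfectness of the mapping class group, which is a well-known open problem. Powell's theorem gives perfectness of $\Mod_g$ itself, not of its finite-index subgroups, so the abelianization comparison you propose has no footing even under the finite-index hypothesis. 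Your dimension/vcd counting digression and the torsion-freeness discussion at the end do not repair either gap.

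The paper's argument is cohomological in degree $2$, not $1$, and turns on a specific class: the Meyer signature cocycle $[\tau_g] \in H^2(\Mod_g;\Q)$, which is nonzero for $g\ge 3$ (Meyer). The key geometric input, due to Kuno, is that the classifying map $\rho\colon \abs{L}\setminus\sD_L \to \cM_g$ pulls $[\tau_g]$ back to zero (\Cref{prop:meyertriv}). One then applies a purely group-theoretic lemma (\Cref{lemma:kernel}): if $f\colon X \to Y$ has surjective $\pi_1^{orb}$-map, $Y$ is a $K(\pi,1)$, and $f^*$ kills a nonzero class in $H^2(Y;\Q)$, then the five-term exact sequence of the extension $1 \to K \to \pi_1(X) \to \pi_1(Y) \to 1$ identifies $\ker(f^*)$ inside $H^2(\pi_1(X);\Q)$ with the image of $H^1(K;\Q)$, forcing $H^1(K;\Q)\ne 0$ and hence $K$ infinite. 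The finite-index hypothesis enters via transfer: pulling $[\tau_g]$ back to the finite cover of $\cM_g$ corresponding to $\im(\rho_{top})$ keeps it nonzero, so the lemma applies with $Y$ that cover. You may wish to note that this transfer step is exactly what replaces any reliance on Ivanov-type statements about finite-index subgroups: rational cohomology in degree $2$ restricts injectively to finite-index subgroups, whereas the corresponding statement in degree $1$ (which your argument would need) is open.
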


In the interest of not appearing to overstate our own contributions, we should remark that the bulk of \Cref{main:infinite} follows from Kuno's work. We think it is worthwhile to explicitly record the fact that finite-index monodromy implies infinite kernel, and that, following the monodromy computations of \cite{saltertoric,nickishan}, this can be established in a variety of cases.\\

Our second main result obtains a stronger version of the above theorem in the setting of plane curves. Say that a group $\Gamma$ is {\em large} if there is a homomorphism $f: \Gamma \to G$, where $G$ is a noncompact real semisimple algebraic group, and the image of $f$ is Zariski dense. By the Tits alternative, large groups always contain nonabelian free groups. In \cite{CT}, Carlson-Toledo study an analogue of our problem, generalizing to the setting of smooth hypersurfaces in any $\CP^{n+1}$ and considering only the homological monodromy action on $H_n(X)$, where $X$ is a smooth hypersurface in $\CP^{n+1}$ of degree $d$. They find that the kernel of the homological monodromy is large, except in the few sporadic cases where it is known to be finite.

Their approach is to study a second family associated to the family of smooth degree-$d$ hypersurfaces: the family of cyclic branched covers of $\CP^{n+1}$ {\em branched over} such hypersurfaces. They use the homological monodromy representation of this second family to detect elements in the kernel of the original representation, employing a mixture of Hodge theory and Lie theory to obtain largeness. Here, we show that the branched cover monodromy is in fact sensitive enough to detect elements in the {\em topological} monodromy kernel as well.

\begin{maintheorem}\label{main:large}
    In the case $X = \CP^2$ and $L = \cO(d)$ for $d \ge 4$, $\ker(\rho_{top})$ is {\em large} in the above sense.
\end{maintheorem}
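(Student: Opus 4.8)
The plan is to follow the Carlson--Toledo strategy for the homological monodromy of the branched cover family, but to supply the extra input needed to see that the kernel elements it produces already lie in the \emph{topological} monodromy kernel. Concretely, fix $d \ge 4$, let $C \subset \CP^2$ be a smooth plane curve of degree $d$, and consider the degree-$d$ cyclic cover $Y \to \CP^2$ branched along $C$. As $C$ varies in $\abs{\cO(d)} \setminus \sD$ one obtains a family of smooth surfaces $Y$, and hence a \emph{branched cover monodromy representation} $\rho_{br}\colon \pi_1(\abs{\cO(d)} \setminus \sD) \to \Sp(H_2(Y;\Z))$ (or the relevant $\Aut$ of the primitive cohomology with its $\Z/d$-action). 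The first step is to record the factorization: the topological monodromy $\rho_{top}$ valued in $\Mod(C)$ determines $\rho_{br}$, because a mapping class of the branch curve $C$ that is realized by a diffeomorphism of the pair $(\CP^2, C)$ — which the geometric monodromy is — lifts to a diffeomorphism of $Y$, well-defined up to the deck group, and its action on $H_2(Y)$ depends only on the isotopy class. Thus $\ker(\rho_{top}) \subseteq \ker(\rho_{br})$ is \emph{not} what we want; rather, we need the reverse containment on a large subgroup, i.e. we need to exhibit a large subgroup $\Lambda$ on which $\rho_{top}$ is trivial but which $\rho_{br}$ detects. So the real content is to find many elements killed by $\rho_{top}$ and show $\rho_{br}$ is large on them.

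The mechanism for producing elements of $\ker(\rho_{top})$ is the same as in \Cref{main:infinite}: since (by the cited computation of the monodromy for plane curves, or by the main theorem's hypothesis applied to $X = \CP^2$) the image of $\rho_{top}$ is of finite index in $\Mod(C)$, and $\Mod(C)$ for $g(C) \ge 3$ contains no free abelian group of rank higher than $3g-3$ while $\pi_1(\abs{\cO(d)} \setminus \sD)$ is ``too big'', one gets an infinite kernel; but to get a \emph{large} kernel I would instead argue as follows. Let $\Gamma = \pi_1(\abs{\cO(d)}\setminus\sD)$ and let $K = \ker(\rho_{top})$. Carlson--Toledo show $\ker(\rho_{top}^{hom})$ (the kernel of the action on $H_1(C)$) maps onto a Zariski-dense subgroup of a noncompact semisimple group under $\rho_{br}$ — their Lie-theoretic/Hodge-theoretic argument identifies the Zariski closure of $\rho_{br}(\Gamma)$ and shows the homological monodromy on $H_1(C)$ cannot account for all of it. The key new step is to upgrade ``$\ker$ of the homological action on $H_1(C)$'' to ``$\ker$ of the topological action'', i.e. to show that $\rho_{br}$ restricted to the (a priori smaller) group $K = \ker(\rho_{top})$ is still Zariski dense in the same noncompact semisimple group $G$. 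For this I would use that the Torelli group $\cI(C)$ — the kernel of $\Mod(C) \to \Sp(2g,\Z)$ — is finitely generated (for $g \ge 3$, by Johnson) and, crucially, that $\rho_{br}$ restricted to the preimage $\rho_{top}^{-1}(\cI(C) \cap \im\rho_{top})$ already surjects onto the ``extra'' semisimple factor: the branched-cover monodromy of a Torelli mapping class of the branch curve can still act nontrivially on $H_2(Y)$, and Carlson--Toledo's computation in fact localizes this action in the part of $H_2(Y)$ not pulled back from $H_1(C)$. Then intersecting with the finite-index condition, $K$ surjects (under $\rho_{br}$) onto a finite-index, hence Zariski-dense, subgroup of $G$.

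In more detail, the steps in order: (1) set up the branched cover family and $\rho_{br}$, and prove the factorization $\rho_{br}$ through the geometric monodromy so that $\rho_{br}$ makes sense as an invariant of $\Mod$-valued data, in particular that $\rho_{top}(\gamma) = \id$ forces the deck-equivariant mapping class of $Y$ to be, up to deck transformations, isotopic to a deck transformation; (2) invoke \Cref{main:infinite}'s hypothesis (valid here since the plane-curve monodromy image is finite index by \cite{saltertoric} or the relevant computation) to know $K = \ker(\rho_{top})$ is large-candidate, i.e. of infinite index contributions; (3) identify, following \cite{CT}, the Zariski closure $G$ of $\rho_{br}(\Gamma)$ as a product involving a noncompact factor $G_0$ not detected by the $H_1(C)$-action; (4) show the composite $K \hookrightarrow \Gamma \xrightarrow{\rho_{br}} G \to G_0$ has Zariski-dense image, by checking it on the finitely many Carlson--Toledo generators of the relevant sub-local-system and observing each can be chosen inside $K$ (either because it is already a Torelli element of the branch curve, or by correcting by an element of the finite-index image of $\rho_{top}$, which changes the $G_0$-component only by a bounded ambiguity); (5) conclude that $K$ is large via the homomorphism $K \to G_0$ with Zariski-dense image, and hence contains a nonabelian free group by Tits. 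The main obstacle is step (4): one must genuinely verify that the kernel of the \emph{topological} monodromy — not merely of the homological monodromy on $H_1(C)$ — still sees all of the noncompact factor $G_0$. This requires understanding how Carlson--Toledo's generating loops in $\Gamma$ act on the mapping class group $\Mod(C)$ of the branch curve, and showing either that these loops (or suitable powers/products) can be taken to act trivially on $C$ up to isotopy, or that the ambiguity coming from the nontrivial $\Mod(C)$-action lies in a subgroup of $G$ that does not surject onto $G_0$; controlling this ambiguity, and in particular ruling out that the extra semisimple factor is ``all explained'' by nontrivial surface mapping classes, is the crux of the argument.
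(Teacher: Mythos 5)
Your high-level instinct is right — the proof does go through the branched-cover monodromy $\rho'$ (your $\rho_{br}$) of Carlson--Toledo, aiming to show that $\rho'_\lambda$ is Zariski dense already on the subgroup $\ker(\rho_{top})$, not merely on $\ker(\rho_{alg})$. But the proposal has a real logical problem at the outset and, more importantly, leaves the crux of the argument as an acknowledged gap.

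First, the factorization claim at the start is wrong in a way that, if true, would destroy the strategy. You assert that $\rho_{top}$ determines $\rho_{br}$, from which you correctly deduce $\ker(\rho_{top}) \subseteq \ker(\rho_{br})$; you then say this is ``not what we want'' but proceed anyway. If the factorization really held, then $\rho_{br}|_{\ker(\rho_{top})}$ would be \emph{trivial}, and no amount of cleverness could make it Zariski dense. The resolution is that the factorization does not hold: the lift of the geometric monodromy to $Y$ depends on the isotopy class of the diffeomorphism of the \emph{pair} $(\CP^2, C)$, not merely on the image in $\Mod(C)$. Two loops in $\cP_d$ with the same $\rho_{top}$-image can restrict to non-isotopic diffeomorphisms of $(\CP^2,C)$, and hence act differently on $H_2(Y)$. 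This failure of factorization is precisely what gives the branched-cover monodromy its power to detect $\ker(\rho_{top})$.

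Second, step (4) — which you yourself call ``the crux'' — is only sketched, and the mechanism you propose (checking Carlson--Toledo's generating loops, correcting by elements of the finite-index $\im(\rho_{top})$, invoking Johnson's finite generation of the Torelli group) does not correspond to anything that works. Carlson--Toledo's generating loops act as Dehn twists in $\Mod(C)$, so they are manifestly not in $K = \ker(\rho_{top})$, and there is no clean way to ``correct'' them into $K$ while controlling the $\rho'_\lambda$-image. The paper's actual mechanism is much more concrete: it constructs a single explicit element $W \in \ker(\rho_{top})$, namely Wajnryb's element built from an $E_6$ configuration $a_0,\dots,a_5$ of vanishing cycles on $C$ (which exists by the change-of-coordinates principle for $r$-spin mapping class groups, \Cref{lemma:E6}). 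That $W \in \ker(\rho_{top})$ is a braid-relation computation (\Cref{lemma:wkernel}). To certify that $\rho'_\lambda(W)$ is nontrivial — in fact of infinite order — the paper first develops the explicit complex-reflection formulas for the action of nodal degenerations on $H_2(Y)_\lambda$ (\Cref{prop:reflection}, \Cref{prop:alphabeta}), uses them to reduce $\rho'_\lambda(W)$ to a $6\times 6$ matrix, and computes its eigenvalues, showing for a suitable range of $\lambda$ that they are not roots of unity. Finally, the upgrade from ``infinite image'' to ``Zariski dense'' is not done generator-by-generator but by a soft normality argument: $\ker(\rho_{top})$ is normal in $\pi_1(\cP_d)$, so $\rho'_\lambda(\ker \rho_{top})$ is normal in the Carlson--Toledo-established Zariski dense $\rho'_\lambda(\ker \rho_{alg})$; since the ambient group is simple and the image is infinite, it must itself be Zariski dense. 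None of the explicit element, the eigenvalue computation, or the normality argument appears in your proposal, and without some version of them, step (4) remains an essential gap.
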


Our proof proceeds by making the work of Carlson-Toledo sufficiently explicit to be able to analyze Wajnryb's kernel element from \cite{wajnryb}, using the monodromy result of \cite{saltertoric} to embed Wajnryb's local setting into our own.

\para{A remark on terminology} We will have need to employ the terminologies of both algebraic geometry and low-dimensional topology. Unfortunately, these are not compatible, owing to algebraic geometry reckoning in complex dimension and topology in real dimension - thus e.g. a ``surface'' to an algebraic geometer is a $4$-manifold to a topologist. We adopt the following conventions: ``algebraic'' will always precede algebro-geometric objects, so that a {\em smooth algebraic curve} is a topological $2$-manifold, an {\em algebraic surface} is a $4$-manifold, etc. We reserve the term ``surface'' for the topological notion. An embedded closed connected $1$-manifold in a surface will be called a {\em simple closed curve}. 

For the sake of concision, we will assume familiarity with the basic notions of singularity theory, in particular {\em nodes, vanishing cycles, cusps, distinguished bases}, as well as the $ADE$ classification. Our reference on these matters is the volume \cite{AGZV}; we will supply more precise pointers as necessary.

 \para{Organization} We prove \Cref{main:infinite} in \Cref{section:thmA}. The proof of \Cref{main:large} occupies the remaining three sections. In \Cref{section:planecurves}, we recall the notions of {\em $r$-spin structures} and their associated {\em $r$-spin mapping class groups} used in describing the image of $\rho_{top}$. In \Cref{section:cyclic}, we give a slight extension of a result of Carlson-Toledo, and the proof of \Cref{main:large} is then carried out in \Cref{section:thmB}.

\para{Acknowledgements} It is a pleasure to thank Yusuke Kuno and Anatoly Libgober for their interest in the project and their comments. The author is supported by NSF grant no. DMS-2338485.

\section{\Cref{main:infinite}: Monodromy kernel via vanishing signature}\label{section:thmA}

Below and throughout, $\cM_g$ denotes the moduli space of smooth algebraic curves of genus $g$, and $\Mod_g$ denotes the mapping class group of closed oriented surfaces of genus $g$. Topologically, $\Mod_g$ is defined as the group of isotopy classes of orientation-preserving diffeomorphisms, and arises algebro-geometrically as the orbifold fundamental group of $\cM_g$:
\[
\Mod_g = \pi_1^{orb}(\cM_g).
\]
If a specific surface $S$ is fixed, we will write $\Mod(S)$ to denote its mapping class group.

\Cref{main:infinite} is adapted from Kuno's arguments in \cite{kuno}, who obtained this result in the case of plane quartic curves (cf. Proposition 6.3 and the subsequent remark in {\em op. cit.}). There are two essential halves of the argument. 

\begin{enumerate}
    \item (cf. \cite[Theorem 4.1]{kuno}) The smooth sections of $L = \cO(d)$ give a classifying map $\rho: \discL \to \cM_g$ for which the induced map $\rho^*: H^2(\Mod_g;\Q) \to H^2(\discL; \Q)$ contains the nontrivial {\em Meyer signature class} $[\tau_g]$ in the kernel.

    \item (cf. \cite[Proposition 6.3]{kuno}) In the case $L = \cO(4)$, the homomorphism $\rho_{top}: \pi_1(\abs{\cO(4)} \setminus \sD_{\cO(4)}) \to \Mod_3$ is surjective. 
\end{enumerate}

We will not need to discuss the Meyer signature cocycle in detail. See \cite{kuno} (or the original \cite{meyer}) for more details, or \cite[Section 5.6]{FM} for a more thorough discussion of $H^2(\Mod_g;\Q)$. Kuno's result now follows from the lemma below.

\begin{lemma} \label{lemma:kernel}
    Let $f: X \to Y$ be a morphism of orbifolds satisfying the following properties:
    \begin{enumerate}
        \item $f_*: \pi_1^{orb}(X) \to \pi_1^{orb}(Y)$ is surjective,
        \item $f^*: H^2(Y;\Q) \to H^2(X;\Q)$ is not injective,
        \item $Y$ is an orbifold $K(\pi,1)$ space,
        \item $X$ and $Y$ are good orbifolds (there are finite covers with trivial orbifold structure).
    \end{enumerate}
    Then $K := \ker(f_*) \le \pi_1^{orb} X$ is infinite.
\end{lemma}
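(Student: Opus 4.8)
The plan is to argue by contradiction: suppose $K = \ker(f_*)$ is finite. Then since $f_*$ is surjective by (1), we have a short exact sequence of groups
\[
1 \to K \to \pi_1^{orb}(X) \xrightarrow{f_*} \pi_1^{orb}(Y) \to 1
\]
with $K$ finite. The goal is to extract from this a contradiction with the failure of injectivity of $f^*$ on $H^2(-;\Q)$. The key point is that, rationally, a finite normal subgroup contributes nothing to cohomology, so $f_*$ should induce an isomorphism (or at least an injection) on $H^2(-;\Q)$, contradicting (2).

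Here is the order I would carry this out. First, use goodness (4) to reduce to spaces: replace $X$ and $Y$ by honest $K(\pi,1)$ spaces (equivalently, work with group cohomology), so that $H^*(X;\Q) \cong H^*(\pi_1^{orb}X;\Q)$ after verifying that $X$ is itself aspherical — this is where hypothesis (3), that $Y$ is an orbifold $K(\pi,1)$, enters, together with the observation that the (orbifold) universal cover of $X$ maps to that of $Y$ with fiber the classifying space of the finite group $K$, whose rational cohomology is trivial in positive degrees; a Leray–Hirsch / spectral sequence argument then shows $X$ is a rational $K(\pi_1^{orb}X, 1)$. Second, invoke the Lyndon–Hochschild–Serre spectral sequence for the extension above with $\Q$ coefficients:
\[
E_2^{p,q} = H^p\bigl(\pi_1^{orb}Y; H^q(K;\Q)\bigr) \Rightarrow H^{p+q}\bigl(\pi_1^{orb}X;\Q\bigr).
\]
Since $K$ is finite, $H^q(K;\Q) = 0$ for $q > 0$ and $H^0(K;\Q) = \Q$ (with, a priori, a possibly nontrivial action of $\pi_1^{orb}Y$ by conjugation — but the action on $H^0 = \Q$ is trivial). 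Hence the spectral sequence collapses and $f_*^*: H^p(\pi_1^{orb}Y;\Q) \to H^p(\pi_1^{orb}X;\Q)$ is an isomorphism for all $p$, in particular for $p = 2$. Third, identify this with the map $f^*: H^2(Y;\Q) \to H^2(X;\Q)$ using the rational asphericity established in step one; this contradicts (2). Therefore $K$ is infinite.

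The main obstacle, and the step deserving the most care, is the first one: making precise the passage from orbifolds to the homological algebra of their orbifold fundamental groups, and in particular checking that $X$ is rationally aspherical. This is not automatic — $X$ itself is not assumed to be a $K(\pi,1)$; only $Y$ is — so one genuinely needs the fibration-up-to-rational-homotopy $BK \to X \to Y$ (or rather, its orbifold incarnation) together with $H^{>0}(BK;\Q) = 0$. Hypothesis (4) is what lets us model everything by finite covers that are genuine manifolds, so that these constructions make sense; one should be a little careful that the relevant covers can be chosen compatibly for $X$ and $Y$ (pull back a good cover of $Y$ and intersect with one for $X$). Once the rational homotopy-theoretic setup is in place, the spectral sequence collapse is routine.
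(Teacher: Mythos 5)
Your overall strategy---reduce to manifolds via goodness, then use a Lyndon--Hochschild--Serre argument with the finiteness of $K$ to force injectivity of $f^*$---is essentially the same as the paper's, which argues directly rather than by contradiction and uses only the five-term exact sequence rather than the full spectral sequence; that difference is cosmetic. But there is a genuine gap in your step one.

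The claim that $X$ is a ``rational $K(\pi_1^{orb}X,1)$'' when $K$ is finite is both unnecessary and false. It is false because nothing in the hypotheses controls the higher homotopy of $X$: take $X = \CP^2$ and $Y$ a point; then $K = \pi_1(X) = 1$ is finite and $Y$ is a $K(\pi,1)$, yet $H^2(X;\Q) = \Q$ while $H^2(\pi_1 X;\Q) = 0$. Relatedly, the ``fibration'' you invoke from the universal cover of $X$ to that of $Y$ with fiber $BK$ does not exist as described: the pullback along $f$ of the universal cover of $Y$ is a covering space of $X$ with discrete fibers and $\pi_1$ equal to $K$, not a bundle with fiber $BK$.

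The fix is that you need much less than an isomorphism: you only need the comparison map $H^2(\pi_1 X;\Q) \to H^2(X;\Q)$ to be \emph{injective}, and this is automatic. The space $K(\pi_1 X, 1)$ is obtained from $X$ by attaching cells of dimension $\ge 3$, so $H_2(X;\Q) \to H_2(\pi_1 X;\Q)$ is always surjective (Hopf exact sequence), and hence the dual map $H^2(\pi_1 X;\Q) \to H^2(X;\Q)$ is always injective. Factor $f$ as $X \xrightarrow{\,g\,} K(\pi_1 X,1) \xrightarrow{\,h\,} Y$; your LHS argument applied to $h$ shows $h^*$ is injective on $H^2$ when $K$ is finite, and composing with the always-injective $g^*$ gives injectivity of $f^* = g^* \circ h^*$ without any asphericity claim for $X$. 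That is exactly what the paper's proof does (formulated directly: $f^*$ not injective forces $h^*$ not injective, and then the five-term sequence forces $H^1(K;\Q) \ne 0$).
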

\begin{proof}
    By property (4), we may pass to manifold covers of $X$ and $Y$; properties (1)-(3) continue to hold in this setting, and infinitude of the kernel at this level implies the same of the original $f_*$. So without loss of generality, we assume below that $X$ and $Y$ are manifolds.

    Since $Y$ is a $K(\pi,1)$ space, $f$ admits a factorization
    \[
    \xymatrix{
        X \ar^<<<<{g}[r] \ar@/_1.5pc/_{f}[rr] & K(\pi_1(X),1) \ar^>>>>>{h}[r]  & Y,
    }
    \]
    inducing on cohomology the diagram
    \[
    \xymatrix{
    H^2(Y;\Q) \ar^<<<<{h^*}[r] \ar@/_1.5pc/_{f^*}[rr] & H^2(\pi_1(X);\Q) \ar^<<<<{g^*}[r] & H^2(X;\Q).
    }
    \]
    Since $K(\pi_1(X),1)$ can be constructed from $X$ by attaching cells of degrees $\ge 3$, it follows that $g_*: H_2(\pi_1(X);\Q) \to H_2(X;\Q)$ is surjective, and dually $g^*$ is injective. Thus if $f^*$ is not injective, necessarily $h^*$ is not injective. The five-term exact sequence for the group extension $1 \to K \to \pi_1(X) \to \pi_1(Y) \to 1$ identifies $\ker(h^*)$ with the image of the connecting map $\delta: H^1(K;\Q) \to H^2(Y;\Q)$. It follows that $H^1(K;\Q)$ must have positive rank, and so $K$ must be infinite.
\end{proof}

\begin{remark}
    There are some slight differences between our working environment and that of Kuno. First, we choose to work in the projective setting of $\abs{L} = \bP(H^0(X;L)^*)$, whereas Kuno's arguments take place in the affine setting. The connectivity results Kuno relies on (discussed in detail below) are equally valid in both settings. 
    
    Somewhat more substantially, Kuno actually shows something stronger than \Cref{main:infinite} for plane curves. To wit, the automorphism group $\PGL_3(\C)$ of $\CP^2$ acts on the parameter space of smooth plane curves $\abs{\cO(d)}\setminus \sD_{\cO(d)}$, and Kuno shows that the monodromy map from the orbifold fundamental group of the quotient $(\abs{\cO(4)}\setminus \sD_{\cO(4)}) / \PGL_3(\C)$ to $\Mod_3$ has infinite kernel. For the sake of uniformity, we will always work in the setting of {\em parameter} spaces, and do not account for the action of the automorphism group in the cases where $X$ has nontrivial automorphisms.
\end{remark}

In subsequent work \cite{kuno2}, Kuno extends the vanishing theorem \cite[Theorem 4.1]{kuno} invoked above to the much more general setting of families of curves embedded in a fixed projective variety $X$. When $X$ is an algebraic surface, his result specializes as follows.

\begin{proposition}[Kuno, cf. \cite{kuno2}, Theorem 1.0.2]\label{prop:meyertriv}
    Let $L$ be a very ample line bundle on a smooth projective algebraic surface $X$ for which a smooth section is an algebraic curve of genus $g$. Let $\abs{L}$ denote the complete linear system, $\sD_L$ the discriminant locus, and $\rho: \discL \to \cM_g$ the classifying map. Let $[\tau_g] \in H^2(\Mod_g;\Q)$ denote the signature class. Then $\rho^*([\tau_g]) = 0$.
\end{proposition}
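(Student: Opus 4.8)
The plan is to reduce the vanishing of $\rho^*([\tau_g])\in H^2(\discL;\Q)$ to a signature computation, via the defining property of the Meyer class, and then to exploit that the discriminant is an \emph{ample} divisor. Write $U:=\discL$ and let $h\in H^2(\abs L;\Z)$ be the hyperplane class (i.e.\ $c_1$ of $\cO_{\abs L}(1)$). First I would note that it suffices to check $\langle\rho^*[\tau_g],f_*[M]\rangle=0$ for every smooth map $f\colon M\to U$ from a closed oriented surface $M$: the edge homomorphism $\Omega_2^{SO}(U)\to H_2(U;\Z)$ is an isomorphism (the Atiyah--Hirzebruch spectral sequence for oriented bordism collapses in this range, since $\Omega_1^{SO}=\Omega_2^{SO}=\Omega_3^{SO}=0$), so such classes span $H_2(U;\Q)$, and $H^2(U;\Q)=\Hom(H_2(U;\Q),\Q)$.

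Next I would invoke Meyer's signature theorem (\cite{meyer}; see also \cite[Section 5.6]{FM}): for the topological $\Sigma_g$-bundle $Z_f\to M$ obtained by pulling back the universal family $\cX_L\to U$ along $f$, one has $\sigma(Z_f)=\langle(\rho\circ f)^*[\tau_g],[M]\rangle=\langle\rho^*[\tau_g],f_*[M]\rangle$ (up to the constant built into the normalization of $[\tau_g]$). So it remains to prove $\sigma(Z_f)=0$. The geometric input is that $Z_f=\{(m,x)\in M\times X : x\text{ lies on the curve }f(m)\}$ is the transverse zero locus in $M\times X$ of a section of the complex line bundle $\cL_f:=\pi_X^*L\otimes\pi_M^*f^*\cO_{\abs L}(1)$ — namely the pullback of the tautological section of $\pi_X^*L\otimes\pi_{\abs L}^*\cO_{\abs L}(1)$ on $X\times\abs L$. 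Hence $[Z_f]$ is Poincar\'e dual in $M\times X$ to $c_1(\cL_f)$, and the Whitney sum formula for rational Pontryagin classes applied to $TZ_f\oplus(\cL_f|_{Z_f})=T(M\times X)|_{Z_f}$ gives $p_1(TZ_f)=\bigl(\pi_X^*p_1(TX)-c_1(\cL_f)^2\bigr)|_{Z_f}$, using $p_1(TM)=0$ for dimension reasons.

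The final ingredient is that $c_1(\cL_f)=\pi_X^*c_1(L)$ in $H^2(M\times X)$: its other summand is $\pi_M^*f^*(h|_U)$, and $h|_U$ is torsion in $H^2(U;\Z)$, because the discriminant $\sD_L$ is a hypersurface in $\abs L$ of some degree $\delta\ge1$ (the projective dual variety of the smooth surface $X$ under $\abs L$), so $\cO_{\abs L}(\delta)$ is trivial on $U$; thus $f^*(h|_U)=0$ in the torsion-free group $H^2(M;\Z)$. Consequently $p_1(TZ_f)$ is the restriction of a class pulled back from $X$, and by the Hirzebruch signature theorem
\[
\sigma(Z_f)=\tfrac13\bigl\langle p_1(TZ_f),[Z_f]\bigr\rangle=\tfrac13\bigl\langle\pi_X^*\bigl((p_1(TX)-c_1(L)^2)\,c_1(L)\bigr),[M\times X]\bigr\rangle=0,
\]
since $(p_1(TX)-c_1(L)^2)\,c_1(L)\in H^6(X)=0$ as $\dim_\R X=4$. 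Therefore $\rho^*[\tau_g]=0$.

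I expect the only genuine care to go into bookkeeping: fixing the orientation/normalization conventions in Meyer's theorem, and justifying that one may test $H^2(U;\Q)$ against maps from surfaces. The signature computation itself collapses once one observes that the ampleness of $\sD_L$ forces $\cL_f$ to be pulled back from $X$, which kills every characteristic number because $X$ is a complex surface. (If one wishes to avoid invoking that $\sD_L$ is a hypersurface, the only alternative is that $\sD_L$ has codimension $\ge2$; then $U$ contains a line $\P^1$, the family over it has trivial monodromy, so $0=\sigma(Z_{\P^1})=(\sigma(X)-L^2)\langle h,[\P^1]\rangle=\sigma(X)-L^2$, and the general formula $\sigma(Z_f)=(\sigma(X)-L^2)\langle f^*h,[M]\rangle$ again gives the result.)
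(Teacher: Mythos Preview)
Your argument is correct. Note, however, that the paper does \emph{not} supply its own proof of this proposition: it is stated with attribution to Kuno and cited as \cite[Theorem~1.0.2]{kuno2}, so there is nothing in the paper to compare against beyond the citation.

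What you have written is a clean, self-contained proof of the cited result in the surface case. The essential mechanism---reduce via Meyer's theorem to a signature computation for the pulled-back family $Z_f\subset M\times X$, then compute $\sigma(Z_f)$ by Hirzebruch's formula and the adjunction/Whitney identity for $p_1$---is exactly the circle of ideas Kuno uses. Your key simplifying observation, that $h|_U$ is torsion because $\sD_L$ is cut out by a section of some $\cO(\delta)$, is the right way to collapse the computation, and your parenthetical general formula $\sigma(Z_f)=(\sigma(X)-L^2)\langle f^*h,[M]\rangle$ in fact handles both cases at once (when $\sD_L$ is a hypersurface the pairing vanishes by torsion; when $\mathrm{codim}\,\sD_L\ge 2$ one has $\pi_1(U)=1$, so the proposition is vacuous for $g\ge 2$ since $\cM_g$ is an orbifold $K(\pi,1)$, and trivially true for $g\le 1$). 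Two small points worth tightening: (i) the transversality of the tautological section over $U\times X$ follows from smoothness of the fibers (the vertical derivative already surjects), so the normal bundle of $Z_f$ is genuinely $\cL_f|_{Z_f}$; (ii) in the codimension-$\ge 2$ branch, the step ``trivial monodromy $\Rightarrow \sigma(Z_{\P^1})=0$'' uses that $\Diff^+(\Sigma_g)$ has contractible components for $g\ge 2$, so the bundle over $S^2$ is actually trivial---you might say this explicitly.
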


\begin{proof}[Proof of \Cref{main:infinite}] 
Let $X$ be a smooth projective algebraic surface, and $L$ a very ample line bundle on $X$. By \Cref{prop:meyertriv}, $[\tau_g]$ is contained in the kernel of $\rho^*: H_2(\Mod_g;\Q) \to H_2(\discL; \Q)$. By hypothesis, a smooth section of $L$ has genus $g \ge 3$, so that $[\tau_g] \in H_2(\Mod_g;\Q)$ is nontrivial by \cite[Satz 2]{meyer}. By hypothesis, 
\[
\rho_*: \pi_1(\discL) \to \Mod_g
\]
has image of finite index in $\Mod_g$. Set $Y$ to be the finite-sheeted cover of $\cM_g$ classified by $\im(\rho_*)$. By transfer, the pullback of $[\tau_g]$ to $H_2(Y;\Q)$ remains nonzero, and so by \Cref{lemma:kernel}, $\rho_*$ has infinite kernel as claimed.

In the case of $X$ a smooth toric surface, the monodromy $\im(\rho_*) \le \Mod_g$ was shown to be of finite index in \cite[Theorem A]{saltertoric}, under the hypotheses that the general fiber is not hyperelliptic and of genus $g \ge 5$. In the case of complete intersection surfaces, the corresponding monodromy calculation was obtained in \cite[Theorem A]{nickishan}. 
\end{proof}

\section{Monodromy of plane curves}\label{section:planecurves}

We turn now to our second main result \Cref{main:large}. For this, we will require a brief recollection of the elements of the theory of {\em $r$-spin mapping class groups}, following the main references \cite{planequintics,saltertoric,strata2,strata3}.

\subsection{$r$-spin structures and winding number functions}
\begin{definition}
    Let $C$ be a smooth algebraic curve. An {\em $r$-spin structure} is a line bundle $L$ for which $L^{\otimes r} \cong K_C$.
\end{definition}

By the adjunction formula, a smooth plane curve $C$ of degree $d$ carries the distinguished $r$-spin structure $L = \cO_C(1)$; here $r = d-3$. It turns out that $r$-spin structures admit a purely topological classification.

\begin{definition}\label{definition:WNF}
    Let $S$ be a closed oriented surface of genus $g$, and let $\cS^+(S)$ denote the set of isotopy classes of oriented simple closed curves on $S$. A {\em $\Z/r\Z$-winding number function} is a set function
    \[
    \phi: \cS^+(S) \to \Z/r\Z
    \]
    that satisfies the following axioms:
    \begin{enumerate}
        \item (Reversibility) $\phi(\bar c) = - \phi(c)$, where $\bar c$ denotes $c$ endowed with the opposite orientation,
        \item (Twist-linearity) $\phi(T_c(b)) = \phi(b) + \pair{b,c}\phi(c)$, where $T_c$ denotes the Dehn twist about $c$ and $\pair{b,c}$ denotes the algebraic intersection number,
        \item (Homological coherence) If $S' \subset S$ is a proper subsurface with boundary components $c_1, \dots, c_k$, then $\sum \phi(c_i) = \chi(S')$, where $\chi(S')$ denotes the Euler characteristic, and all $c_i$ are oriented with $S'$ lying to the left. 
    \end{enumerate}
\end{definition}

\begin{lemma}
    Let $C$ be a smooth algebraic curve. There is a one-to-one correspondence between the set of $r$-spin structures on $C$ and the set of $\Z/r\Z$-winding number functions on $C$. 
\end{lemma}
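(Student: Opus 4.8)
The plan is to exhibit a natural bijection between $r$-spin structures on $C$ (line bundles $L$ with $L^{\otimes r}\cong K_C$) and $\Z/r\Z$-winding number functions, and to do so via an intermediate object: the isotopy class of a \emph{nowhere-zero section of a line field} (or, equivalently, a foliation/framing data) modulo $r$. First I would recall that a winding number function $\phi\colon \cS^+(S)\to\Z/r\Z$ is, by the work in the cited references, equivalent to a $\Z/r\Z$-valued cohomological datum: the twist-linearity axiom shows $\phi$ is determined by its values on a geometric symplectic basis and on a collection of test curves, and homological coherence constrains these so that $\phi$ is governed by a class in $H^1$ of the unit tangent bundle $UT(S)$ restricting to the generator on the fiber $\Z$ reduced mod $r$. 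More precisely, one shows $\phi$ corresponds bijectively to a cohomology class $\hat\phi\in H^1(UT(S);\Z/r\Z)$ whose restriction to the $S^1$-fiber is the canonical generator mod $r$; this is the content I would cite from \cite{strata2,saltertoric} rather than reprove.

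The second half of the correspondence is the complex-geometric input. Given an $r$-spin structure $L$ with $L^{\otimes r}\cong K_C$, I would observe that $K_C\cong T^*_{\mathbb R}C$ as the underlying smooth oriented line bundle is the (dual of the) tangent bundle, so $L$ is an $r$-th root of $T^*C$ as a topological complex line bundle. The set of such $r$-th roots is a torsor over $H^1(C;\Z/r\Z)$ (since $\mathrm{Pic}(C)[r]$-worth of topological square roots, and more precisely the topological $r$-th roots of a fixed complex line bundle form an $H^1(C;\Z/r\Z)$-torsor, nonempty because $r \mid \deg K_C = 2g-2$ is automatic only when... here one uses that $C$ is a curve where $K_C$ admits roots — and indeed the existence of $\cO_C(1)$ in the plane-curve case, and in general the divisibility $r\mid 2g-2$, guarantees it). A choice of $r$-th root is then the same as a choice of trivialization-up-to-$r$ of the associated $S^1$-bundle, i.e. a lift of the classifying map of $UT(C)$ through the $r$-fold cover $S^1\xrightarrow{z\mapsto z^r}S^1$; obstruction theory identifies the set of such lifts with the set of classes in $H^1(UT(C);\Z/r\Z)$ restricting to the generator mod $r$ on the fiber. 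This matches exactly the target of the previous paragraph.

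Putting the two descriptions together gives the bijection: both $r$-spin structures and $\Z/r\Z$-winding number functions are in canonical bijection with $\{\alpha\in H^1(UT(C);\Z/r\Z) : \alpha|_{\text{fiber}} = 1\}$, hence with each other. I would then check that the correspondence is the ``expected'' one by verifying it on the generating curves: given an oriented simple closed curve $c$, the value $\phi(c)$ should equal the winding number, measured against the framing coming from the section of $L$, of the tangent vector field of $c$ as one traverses it once — equivalently the evaluation of $\alpha$ on the loop in $UT(C)$ traced by $(c(t),c'(t))$. One then confirms the three axioms (reversibility, twist-linearity, homological coherence) hold for this $\phi$: reversibility is clear from reversing the loop, twist-linearity is the standard Dehn-twist-on-winding-number computation, and homological coherence is the Poincaré–Hopf / Gauss–Bonnet identity $\sum(\text{boundary winding numbers}) = \chi(S')$ reduced mod $r$ — these are exactly the identities established in the $r$-spin literature I would cite.

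The main obstacle — or at least the one point requiring care rather than citation — is the \emph{existence and torsor structure} step: I must make sure that topological $r$-th roots of $T^*C$ exist and that the set of them is genuinely an $H^1(C;\Z/r\Z)$-torsor matching the $H^1(UT(C);\Z/r\Z)$ description, which amounts to a clean Gysin-sequence argument for the circle bundle $S^1\to UT(C)\to C$ together with the observation that the relevant obstruction in $H^2(C;\Z/r\Z)\cong\Z/r\Z$ is the reduction of $\deg T^*C = 2-2g$, which vanishes mod $r$ precisely when $r\mid 2g-2$ — a condition automatically satisfied for an $r$-spin curve since the holomorphic $r$-spin structure forces $r\mid\deg K_C$. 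Everything else is a matter of assembling known equivalences from \cite{planequintics,saltertoric,strata2,strata3}, and I would keep the write-up at the level of identifying both sides with the common $H^1(UT(C);\Z/r\Z)$-object rather than spelling out each axiom verification.
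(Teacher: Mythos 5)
The paper gives no proof of its own here---it cites \cite[Sections 2, 3]{planequintics}---and your sketch is a faithful outline of the argument carried out there: identify both sides with the coset of classes in $H^1(UT(C);\Z/r\Z)$ restricting to the generator on a fiber, via the Humphries--Johnson-type correspondence for winding number functions on one side and a Gysin-sequence/obstruction-theory description of $r$-th roots of the (co)tangent circle bundle on the other, with the holomorphic-to-topological comparison handled by the $\Pic^0(C)[r]\cong H^1(C;\Z/r\Z)$ torsor equivariance. One small slip: $\deg T^*C = \deg K_C = 2g-2$, not $2-2g$ (you wrote $\chi(C)$), though this does not affect the mod-$r$ vanishing condition $r\mid 2g-2$ that you correctly identify as the common nonemptiness criterion for both sides.
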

\begin{proof}
    See \cite[Sections 2, 3]{planequintics}.
\end{proof}

\subsection{$r$-spin mapping class groups and monodromy of plane curves}
The mapping class group $\Mod(S)$ acts on the set of $\Z/r\Z$-winding number functions (and hence on the set of $r$-spin structures) by means of its action on $\cS^+(S)$. 
\begin{definition}
    Let $\phi$ be a $\Z/r\Z$-winding number function on a surface $S$. The associated {\em $r$-spin mapping class group} $\Mod(S)[\phi]$ is the stabilizer of $\phi$ under the action of $\Mod(S)$ on the set of $\Z/r\Z$-winding number functions. 
\end{definition}

In light of the correspondence between $r$-spin structures and $\Z/r\Z$-winding number functions, and in light of the fact that $\cO(1)$ determines a $d-3$-spin structure on all smooth plane curves of degree $d$, we obtain the containment
\[
\im(\rho_{top}) \le \Mod(C)[\phi],
\]
where $C$ is a smooth plane curve of degree $d$ and $\phi$ is the $\Z/(d-3)\Z$-spin structure associated to $\cO_C(1)$. 

\begin{theorem}\label{theorem:monodromy}
    For all $d \ge 4$, this containment is an equality:
    \[
    \im(\rho_{top}) = \Mod(C)[\phi].
    \]
\end{theorem}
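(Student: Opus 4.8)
The plan is to deduce the equality from the known monodromy of linear systems on toric surfaces, exactly as promised in the introduction. Since $\mathbb{CP}^2$ is a smooth toric surface and $\mathcal{O}(d)$ is an ample line bundle on it whose generic section is a smooth plane curve of degree $d$, the results of \cite{saltertoric} apply whenever that generic curve is non-hyperelliptic and of genus $g\ge 5$, i.e. for $d\ge 6$; \cite[Theorem A]{saltertoric} (or its refinement identifying the image with an $r$-spin mapping class group) gives $\im(\rho_{top})=\Mod(C)[\phi]$ in that range. So the first step is simply to invoke that theorem, noting that the $r$-spin structure detected there is precisely the one coming from $\mathcal{O}_C(1)$ with $r=d-3$, since $\mathcal{O}_C(1)^{\otimes (d-3)}\cong K_C$ by adjunction. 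This leaves only the small-degree cases $d=4$ and $d=5$ to be handled separately.

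For these sporadic low-degree cases I would argue as follows. For $d=4$ we have $r=1$, so $\Mod(C)[\phi]=\Mod(C)=\Mod_3$ (a $1$-spin structure imposes no constraint, or one invokes that every genus-$3$ curve is automatically $1$-spin), and surjectivity of $\rho_{top}$ onto $\Mod_3$ is Kuno's result recalled above as item (2) in \Cref{section:thmA} (cf. \cite[Proposition 6.3]{kuno}). For $d=5$ we have $r=2$ and $g=6$; here the relevant statement is the theorem of \cite{planequintics} (whose title is literally about plane quintics), which identifies $\im(\rho_{top})$ with the spin mapping class group $\Mod(C)[\phi]$ associated to the even $2$-spin structure $\mathcal{O}_C(1)$. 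So in all three remaining cases the needed surjectivity onto the appropriate $r$-spin mapping class group is already in the literature.

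The one point requiring genuine care — and the step I expect to be the main obstacle to a clean write-up — is the compatibility of the $r$-spin structure arising in \cite{saltertoric} (which is formulated combinatorially, in terms of a winding number function determined by the toric/Newton-polygon data of the linear system) with the holomorphic $r$-spin structure $\mathcal{O}_C(1)$ used to define $\phi$ here. One must check that the two winding number functions coincide, not merely that they lie in the same $\Mod(C)$-orbit; since a degree-$d$ plane curve has a unique "distinguished" $(d-3)$-spin structure up to the relevant symmetries, this is true, but it should be spelled out (for instance by matching the defining axioms of \Cref{definition:WNF} against the description of the toric winding number function, or by citing the relevant identification lemma from \cite{saltertoric,strata2}). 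Once that identification is in hand, assembling the three ranges $d=4$, $d=5$, $d\ge 6$ gives the claimed equality for all $d\ge 4$.
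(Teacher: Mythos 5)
Your proposal is correct and reaches the right answer, but the paper's own proof is simply a one-line citation: it points to \cite[Corollary 1.3]{strata2} or \cite[Theorem A]{nickishan}, each of which directly identifies $\im(\rho_{top})$ with $\Mod(C)[\phi]$ for plane curves of all degrees $d \ge 4$ in one statement. You instead reassemble the result from a case analysis: Kuno \cite{kuno} for $d=4$ (where $r=1$ and the $r$-spin constraint is vacuous), \cite{planequintics} for $d=5$, and \cite{saltertoric} for $d \ge 6$. This works, and the compatibility point you raise---that the winding number function produced combinatorially from the toric data must agree on the nose with the one determined by $\mathcal{O}_C(1)$, not merely up to $\Mod(C)$-action---is a genuine (if standard) thing to check and is addressed in the sources. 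One small slip: a smooth plane quintic has genus $g = (5-1)(5-2)/2 = 6 \ge 5$ and is non-hyperelliptic, so the hypotheses of \cite[Theorem~A]{saltertoric} already cover $d = 5$, making the separate appeal to \cite{planequintics} redundant (though harmless); your cutoff should be $d \ge 5$, not $d \ge 6$. The net comparison: the paper trades your three-case assembly for a single citation to a statement tailored to plane curves, at no change in mathematical content.
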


\begin{proof}
    See \cite[Corollary 1.3]{strata2} or \cite[Theorem A]{nickishan}.
\end{proof}

\subsection{Change of coordinates} The monodromy theorem \Cref{theorem:monodromy} is a very useful tool for constructing vanishing cycles with prescribed topological properties. By the Picard-Lefschetz formula, if $a \subset C$ is a vanishing cycle and is nonseparating, then the Dehn twist $T_a$ about $a$ satisfies $T_a \in \im(\rho_{top})$. Such $T_a$ must necessarily preserve the $\Z/r\Z$-winding number function $\phi$, and from the twist-linearity condition of \Cref{definition:WNF}, it follows that $\phi(a) = 0$.  

Accordingly, we say that a nonseparating simple closed curve $a \subset C$ is {\em admissible} if $\phi(a) = 0$ (necessarily for either choice of orientation of $a$). A corollary of \Cref{theorem:monodromy} (cf. \cite[Lemma 11.5]{saltertoric}) is that {\em every} admissible curve arises from a vanishing cycle. 

We obtain \Cref{lemma:E6} below via an application of the ``change-of-coordinates principle for $r$-spin mapping class groups'' (cf. \cite[Proposition 2.15]{strata3}). A set of curves $a_0, \dots, a_5 \subset C$ is said to be {\em of type $E_6$} if the geometric intersection $i(a_i,a_j)$ is $1$ or $0$ according to whether the corresponding vertices $i,j$ in the Dynkin diagram of type $E_6$ are adjacent or not (thus, $i$ and $i+1$ are adjacent for $i = 1, \dots, 4$ and additionally $0$ and $3$ are adjacent).

\begin{lemma}\label{lemma:E6}
    Let $C$ be a smooth plane curve of degree $d \ge 4$. Then there is a configuration $a_0, a_1, \dots, a_5$ of vanishing cycles on $C$ of type $E_6$. 
\end{lemma}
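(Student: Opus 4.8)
The plan is to invoke the change-of-coordinates principle for $r$-spin mapping class groups (\cite[Proposition 2.15]{strata3}), which asserts that two configurations of admissible curves realizing the same combinatorial/topological data are equivalent under $\Mod(C)[\phi]$, provided only that such a configuration exists abstractly and that all the necessary homological invariants match. By \Cref{theorem:monodromy} this stabilizer is exactly $\im(\rho_{top})$, so producing \emph{one} $E_6$ configuration of vanishing cycles is equivalent to producing one configuration of admissible simple closed curves with the prescribed intersection pattern on a closed surface of genus $g = \binom{d-1}{2}$ carrying the winding number function $\phi$ associated to $\cO_C(1)$. So the whole problem reduces to a surface-topology construction: exhibit curves $a_0, \dots, a_5$ with $i(a_i,a_j)$ matching the $E_6$ Dynkin diagram and with $\phi(a_i) = 0$ for all $i$.

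First I would recall (from the change-of-coordinates machinery in \cite{saltertoric,strata3}) the precise list of obstructions to realizing a given configuration: each curve must be admissible, each curve must be nonseparating, and — crucially for a configuration with cycles in the associated graph or with a complicated complement — the subsurface filled by the configuration must itself support a winding number function restricting correctly, i.e. the Euler-characteristic/homological-coherence constraints of \Cref{definition:WNF}(3) must be compatible. For the $E_6$ diagram, which is a tree, the subsurface $N$ filled by a regular neighborhood of $a_0 \cup \dots \cup a_5$ is a genus-$2$ surface with one boundary component (six curves pairwise intersecting according to a tree on six vertices gives $\chi(N) = 6 - 2\cdot 5 = -4$, hence genus $2$, one boundary circle once one checks connectivity of the complement of the neighborhood's boundary). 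So the task is: build an $E_6$-chain of curves on a standard genus-$2$ subsurface, check that the ambient $\phi$ can be arranged (or rather, that $\phi$ being a genuine winding number function forces no contradiction) to vanish on each $a_i$, and then appeal to change-of-coordinates to transport this into $C$.

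The key steps in order: (1) State the change-of-coordinates principle in the exact form needed, reducing the lemma to the existence of an $E_6$ configuration of admissible nonseparating curves; (2) recall from \cite{saltertoric} (e.g. the discussion surrounding \cite[Lemma 11.5]{saltertoric}, and the analysis of $A_n$/$D_n$/$E_n$ chains there) that for $d \ge 4$, hence $r = d - 3 \ge 1$, the genus $g = \binom{d-1}{2}$ is large enough that a genus-$2$ subsurface filled by an $E_6$-chain embeds in $C$ with every constituent curve nonseparating in $C$; (3) verify admissibility: since each $a_i$ is nonseparating and we have freedom in the embedding, use the fact (again from the $r$-spin change-of-coordinates results) that on a surface of genus $g \ge 2$ any tree-type configuration of nonseparating curves can be realized with all curves admissible when $r \mid$ the relevant Euler characteristics work out — here the only genuine constraint comes from \Cref{definition:WNF}(3) applied to the subsurfaces cut out, and for the $E_6$ tree these are automatically satisfiable; (4) conclude via \Cref{theorem:monodromy} that the twists $T_{a_i}$ lie in $\im(\rho_{top})$, so each $a_i$ is a vanishing cycle, completing the proof.

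The main obstacle I anticipate is step (3): one must be careful that \emph{simultaneously} all six curves are admissible with respect to the \emph{given} $\phi$ (not just individually realizable), which is where the homological-coherence axiom bites — the winding numbers of the boundary curves of each complementary piece are constrained, and one needs the $E_6$ configuration to be ``balanced'' enough that these constraints are consistent with $\phi(a_i) = 0$ throughout. In practice this is handled by the general change-of-coordinates theorem of \cite{strata3} once one observes that the $E_6$ diagram is a tree and $d - 3 \ge 1$, but making the reduction clean — and in particular handling the edge case $d = 4$, $r = 1$ where every curve is automatically admissible — is the step that requires genuine care rather than routine citation.
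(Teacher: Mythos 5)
Your overall strategy is exactly what the paper does: the paper's ``proof'' of this lemma is the single sentence preceding its statement, citing the change-of-coordinates principle for $r$-spin mapping class groups \cite[Proposition 2.15]{strata3} together with \Cref{theorem:monodromy} and the fact (cf. \cite[Lemma 11.5]{saltertoric}) that every admissible nonseparating curve is a vanishing cycle. So the reduction you propose --- realize an $E_6$ configuration of admissible curves topologically, then transport and certify via change-of-coordinates --- is the intended argument.

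However, there is a concrete arithmetic error in your fleshing-out of step (2). A regular neighborhood $N$ of a union of six simple closed curves intersecting pairwise according to the $E_6$ tree (five transverse double points) deformation retracts onto a graph with $5$ vertices and $10$ edges, so $\chi(N) = 5 - 10 = -5$, not $6 - 2\cdot 5 = -4$ as you wrote. (Your own numbers are also internally inconsistent: a genus-$2$ surface with one boundary component has $\chi = -3$.) Since the six curves are homologically independent and the mod-$2$ intersection form of $E_6$ is nondegenerate, the radical of the intersection pairing on $H_1(N)$ is trivial, which forces one boundary component; together with $\chi(N) = -5$ this gives genus $3$, not genus $2$. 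This does not derail the argument --- a genus-$3$ subsurface with one boundary still embeds in $C$ for all $d \ge 4$ since $g(C) = \binom{d-1}{2} \ge 3$, and the homological coherence constraint on $\phi(\partial N)$ is automatically compatible with $\phi|_C$ because $2 - 2g(C) = -d(d-3) \equiv 0 \pmod{r}$ --- but the numbers should be corrected. One more small point: in step (4), the fact you actually need is not merely $T_{a_i} \in \im(\rho_{top})$ (which is immediate from admissibility and \Cref{theorem:monodromy}) but the stronger statement that every admissible nonseparating curve is a vanishing cycle, which is the cited \cite[Lemma 11.5]{saltertoric} rather than \Cref{theorem:monodromy} alone.
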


\section{Cyclic branched covers}\label{section:cyclic}

Here we recall some of the elements of the work of Carlson-Toledo \cite{CT}. They consider the space 
\[
\cP_d := \abs{\cO(d)} \setminus \sD_{\cO(d)}
\]
of smooth plane curves. For every divisor $k$ of $d$, a curve $C \in \cP_d$ determines a $k$-fold cyclic branched covering $Y \to \CP^2$ with branch locus $C$. There are several associated monodromy representations. Set
\[
\rho_{top}: \pi_1(\cP_d) \to \Mod(C)
\]
to be the topological monodromy of the family of smooth plane curves of degree $d$, and set 
\[
\rho_{alg}: \pi_1(\cP_d) \to \Sp_{2g}(\Z)
\]
to be the induced action on $H_1(C;\Z)$. 
With $k \mid d$ understood, define
\[
\rho': \pi_1(\cP_d) \to \Aut(H_2(Y;\C))
\]
as the monodromy action on the family of $k$-fold cyclic branched covers. The cup product pairing $(\cdot, \cdot)$ on $H_2(Y; \Z)$ extends to a Hermitian form $h(x, y) = (x, \bar y)$ on $H_2(Y;\C)$. Since $Y$ carries an action of $\Z/k\Z$ that commutes with the action of $\rho'$, the eigenspace decomposition 
\[
H_2(Y;\C) = \bigoplus_{\lambda: \lambda^k = 1} H_2(Y)_\lambda
\]
is $\rho'$-invariant. Consequently, we define
\[
\rho'_\lambda:\ \pi_1(\cP_d) \to \Aut(H_2(Y)_\lambda)
\]
as the restriction of $\rho'$ to a chosen eigenspace.

The main results of this section are \Cref{prop:reflection} and \Cref{prop:alphabeta}. In the case of $k$ and $d$ both odd, these were obtained by Carlson-Toledo \cite[Proposition 6.1]{CT}. They are undoubtedly known to experts, but are included since we were unable to find a suitable reference.

\begin{proposition}\label{prop:reflection}
    Let $C \subset \CP^2$ be a smooth plane curve of degree $d$, and let $Y$ be the $k$-fold cyclic branched cover of $\CP^2$ branched along $C$, for some $k$ dividing $d$. Under a nodal degeneration of $C$ with vanishing cycle $\alpha \subset C$, the induced monodromy on $H_2(Y)_\lambda$ is given by a complex reflection
    \[
    x \mapsto x - (\lambda - 1)h(x,\eta_\alpha)\eta_\alpha
    \]
    for some element $\eta_\alpha \in H_2(Y)_\lambda$ determined by the vanishing cycle $\alpha \subset C$. 
\end{proposition}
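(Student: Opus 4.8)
\textbf{Overall strategy.} The plan is to reduce the computation to a local Picard--Lefschetz calculation upstairs in $Y$, exactly as in the classical theory of Lefschetz pencils, and then to package the resulting transvection according to the eigenspace decomposition. A nodal degeneration of $C$ with vanishing cycle $\alpha$ corresponds to a path in $\cP_d$ approaching a point of $\sD_{\cO(d)}$ parametrizing a curve with a single node; near that boundary point the total space of the family of branched covers acquires an explicitly describable singularity, and the monodromy is dictated by how the $k$-fold cover behaves in a neighborhood of the node. So the first step is to set up local coordinates: in a bidisk neighborhood of the node, $C$ is cut out by an equation $z_1 z_2 = t$ (with $t$ the deformation parameter), and the $k$-fold cyclic cover branched over $\{z_1 z_2 = t\}$ is the hypersurface $w^k = f$, where $f$ is a defining equation whose vanishing locus is the branch curve. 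The key point is to identify the local model of the pair $(Y_t, \text{cover})$ and to see which elements of $H_2$ are swept out by the vanishing data.

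\textbf{Key steps, in order.} (1) Localize: choose a nodal degeneration and work in a neighborhood of the node, writing down the explicit local equation of the $k$-fold cover and identifying the local vanishing (co)homology — I expect the preimage of a Lefschetz thimble for $\alpha$ under the branched cover to be (a chain representing) the class $\eta_\alpha$, or a $\Z/k\Z$-orbit thereof, with the eigenspace projection producing the single class $\eta_\alpha \in H_2(Y)_\lambda$. (2) Apply the Picard--Lefschetz formula to the family $Y_t$: the monodromy on $H_2(Y;\C)$ is a product of transvections in the classes lying over $\alpha$, one for each sheet, and because the $\Z/k\Z$-action permutes these classes cyclically, on the $\lambda$-eigenspace the composite collapses to the single complex reflection displayed, with the factor $(\lambda - 1)$ emerging precisely from summing a geometric series $1 + \lambda + \dots$ weighted by the deck action — this is the standard mechanism by which integral transvections become complex reflections on eigenspaces of a cyclic cover. (3) Identify the coefficient: match the Hermitian form $h$ against the intersection pairing so that the Picard--Lefschetz coefficient becomes $-(\lambda-1)h(x,\eta_\alpha)$, fixing signs and normalizations via the self-intersection of $\eta_\alpha$. (4) Check that $\eta_\alpha$ genuinely lands in $H_2(Y)_\lambda$ and depends only on the isotopy class of $\alpha \subset C$ (not on the chosen path), which follows since the vanishing cycle of a nodal degeneration is well-defined up to the mapping class, and the cover construction is functorial in $C$.

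\textbf{Main obstacle.} The delicate part is step (2)--(3): correctly bookkeeping the $\Z/k\Z$-action on the local vanishing homology and extracting the exact coefficient $(\lambda - 1)$ (rather than $(\lambda - 1)$ up to a unit, or a different eigenvalue-dependent scalar), together with pinning down the precise relation between $\eta_\alpha$ and the geometric data so that the formula is basis-independent. One must be careful that the node of $C$ may or may not be a branch point of the relevant sheet structure in a way that depends on $k$, and the cases $k$ even versus $k$ odd (and $\lambda = -1$ versus $\lambda$ a primitive root) can behave differently — indeed this is exactly the subtlety that the parenthetical remark about Carlson--Toledo's restriction to $k,d$ odd is flagging. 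I would handle this by first treating the case where $\gcd$ considerations make the local cover a disjoint union of smooth sheets away from the thimble, computing there, and then arguing the general formula by continuity/naturality; alternatively, one can quote the local normal form $w^k = z_1 z_2 - t$ for the total space and read off the monodromy from the known monodromy of the $A_{k-1}$-type degeneration. The rest — reversibility under orientation, well-definedness — is routine once the local model is in hand.
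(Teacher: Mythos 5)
Your proposal takes essentially the same route as the paper: the paper also localizes at the node, identifies the singularity of the total space of branched covers as type $A_{k-1}$ via the local model $x^2 + y^2 + z^k = 0$, takes the distinguished basis of vanishing cycles $\delta_1, \dots, \delta_{k-1}$ for that singularity, shows the deck transformation $\sigma$ permutes them cyclically, and then carries out the geometric-series computation that collapses the product of $k-1$ Picard--Lefschetz transvections to a single complex reflection on each eigenspace, with $\eta_\alpha$ obtained by projecting $\delta_1$ into $H_2(Y)_\lambda$ and normalizing. Your ``alternatively, quote the local normal form $w^k = z_1 z_2 - t$ and read off the monodromy from the known monodromy of the $A_{k-1}$-type degeneration'' is in fact precisely the paper's argument; the only small slip is ``one [transvection] for each sheet,'' since the distinguished basis has $k-1$ cycles (the $\Z/k\Z$-action permutes $\delta_1,\dots,\delta_{k-1},\delta_k$ with $\delta_k := -(\delta_1+\cdots+\delta_{k-1})$), and the parity worry you flag about Carlson--Toledo's $k,d$ odd restriction turns out to be a non-issue once the $A_{k-1}$ picture is in place.
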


\begin{remark}
    The construction of $\eta_\alpha$ from $\alpha$ depends on an arbitrary choice (the distinguished basis of vanishing cycles for the singularity in $Y$). It is not hard to see that different choices scale $\eta_\alpha$ by some $k^{th}$ root of unity; in particular, the formula of \Cref{prop:reflection} is well-defined.
\end{remark}

For simple closed curves $\alpha, \beta$ on a smooth algebraic curve $C$, the geometric intersection number of $\alpha$ and $\beta$ is notated, as is customary, by $i(\alpha,\beta)$.

\begin{proposition}\label{prop:alphabeta}
    With $C, Y$ as in \Cref{prop:reflection}, let $\alpha, \beta$ be vanishing cycles on $C$. If $i(\alpha,\beta) = 0$, then $h(\eta_\alpha, \eta_\beta) = 0$, and if $i(\alpha,\beta) = 1$, then $h(\eta_\alpha, \eta_\beta) = (1- \lambda)^{-1}$ (possibly after replacing $\eta_\beta$ with some equivalent $\pm \zeta \eta_\beta$ for $\zeta$ a $k^{th}$ root of unity).
\end{proposition}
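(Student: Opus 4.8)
The plan is to reduce the computation to the local picture near the node in $Y$ and to the classical description of the intersection form on the Milnor fiber of a suspension singularity. Recall that a nodal degeneration of $C$ with vanishing cycle $\alpha$ produces, in the $k$-fold cyclic cover $Y$, a degeneration with a single singular point whose local model is $x^2 + y^2 + z^k = 0$ (the $k$-fold cover of the node $x^2+y^2=0$ in $\CP^2$ branched over it, in suitable local coordinates). The class $\eta_\alpha$ is, by construction (as in \Cref{prop:reflection}), built from a distinguished basis of vanishing cycles $\delta_1,\dots,\delta_{k-1}$ for this $A_{k-1}$-chain of $2$-spheres upstairs — more precisely $\eta_\alpha$ is the eigencomponent $\sum_j \lambda^{-j}\delta_j$ (or a scalar multiple thereof) lying in $H_2(Y)_\lambda$. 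The self-intersection computation $h(\eta_\alpha,\eta_\alpha)$ is already implicit in the reflection formula of \Cref{prop:reflection}: comparing the Picard--Lefschetz reflection $x \mapsto x - (\lambda-1)h(x,\eta_\alpha)\eta_\alpha$ with the general form of a complex reflection forces $h(\eta_\alpha,\eta_\alpha) = (1-\lambda)^{-1}(1-\bar\lambda)^{-1}\cdot(\text{something})$; but we will not need the self-pairing, only the mixed pairings, so I would not dwell on this.

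First I would handle the case $i(\alpha,\beta) = 0$. Here one can choose the two nodal degenerations to be supported in disjoint coordinate patches of $\CP^2$ (using the change-of-coordinates principle, or simply that disjoint vanishing cycles bound disjoint nodal degenerations in a common deformation — this is where \Cref{theorem:monodromy} and the discussion of admissible curves feed in, guaranteeing both $\alpha$ and $\beta$ really do arise from vanishing cycles realizable simultaneously). Then the local cover contributions $\eta_\alpha$ and $\eta_\beta$ are supported on disjoint pieces of $Y$, so $h(\eta_\alpha,\eta_\beta) = 0$ by locality of the cup product. Second, for $i(\alpha,\beta) = 1$, I would put $\alpha,\beta$ into the standard local model of an $A_2$ (or more precisely a cusp-type) degeneration of $C$: two simple closed curves meeting once bound a degeneration to a curve with an $A_2$ singularity, whose local equation is $x^2 + y^3$ (or $y^2 - x^3$). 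Taking the $k$-fold cyclic cover branched over this gives the singularity $x^2 + y^3 + z^k = 0$, whose Milnor lattice is the tensor-product (join) lattice $A_1 \otimes A_2 \otimes A_{k-1}$ with its Gabrielov/Thom--Sebastiani intersection form. Inside this lattice, $\eta_\alpha$ and $\eta_\beta$ are the two eigen-classes in $H_2(Y)_\lambda$ coming from the two $A_1$-factors, and their Hermitian pairing can be read off from the Thom--Sebastiani formula: the pairing in the $\lambda$-eigenspace is $(1-\lambda)$ times the product of the relevant Seifert-form entries in the $A_2$ and $A_{k-1}$ factors. A direct computation with the $2\times(k-1)$ array of vanishing cycles $\delta_{ij}$ (indexed by $A_2 \times A_{k-1}$) gives $h(\eta_\alpha,\eta_\beta) = (1-\lambda)^{-1}$ after the allowed normalization of $\eta_\beta$ by a root of unity and sign.

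The main obstacle, and the place I would be most careful, is pinning down the precise normalization so that the answer is exactly $(1-\lambda)^{-1}$ rather than some other unit multiple, and making the "disjoint support" argument in the $i=0$ case genuinely rigorous: one must exhibit a single algebraic degeneration (a pencil, or a path in $\abs{\cO(d)}$) in which $\alpha$ and $\beta$ appear as vanishing cycles of two distinct nodes whose local covers sit in disjoint parts of $Y$. This is plausible from the change-of-coordinates principle for $r$-spin mapping class groups (\Cref{theorem:monodromy} and the paragraph following it, together with \cite[Proposition 2.15]{strata3}), but spelling it out — that an $A_1^2$ or $A_2$ configuration of admissible curves is realized by an honest plane-curve degeneration with the expected local monodromy in $Y$ — is the crux. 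Given that, the remaining Thom--Sebastiani bookkeeping is routine and I would cite \cite{AGZV} for the join formula rather than rederive it. The indeterminacy in $\eta_\beta$ (a $k$-th root of unity times $\pm 1$, exactly as flagged in the remark after \Cref{prop:reflection}) is then harmless and is precisely what the parenthetical in the statement allows.
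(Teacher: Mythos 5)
Your approach matches the paper's: disjointness of supports handles $i(\alpha,\beta)=0$, and for $i(\alpha,\beta)=1$ one realizes $\alpha,\beta$ as the distinguished basis of a cuspidal degeneration and computes in the Sebastiani--Thom join lattice of $x^2+y^3+z^k$, with realizability of the pair $\alpha,\beta$ supplied exactly as you say by \Cref{theorem:monodromy} together with the change-of-coordinates principle for $r$-spin mapping class groups. The paper closes the ``crux'' you flag by a short concatenation-of-paths argument (a path to a cusp precomposed with a monodromy loop carrying the cusp's local distinguished basis to $\alpha,\beta$); one small slip in your description is that the two chains $\delta_{\alpha,\bullet},\delta_{\beta,\bullet}$ come from the two vertices of the $A_2$ factor ($y^3$), not from ``two $A_1$-factors.''
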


\begin{corollary}\label{corollary:monformula}
    In the setting of \Cref{prop:alphabeta}, let $T_\alpha, T_\beta \in \Aut(H_2(Y)_\lambda)$ denote the monodromies associated to degeneration along $\alpha, \beta$ respectively. Then 
    \[
    T_\beta(\eta_\beta) = \lambda \eta_\beta.
    \]
    
    If $i(\alpha, \beta) = 1$ and $h(\eta_\alpha, \eta_\beta) = (1-\lambda)^{-1}$, then
    \[
    T_\beta(\eta_\alpha) = \eta_\alpha + \eta_\beta \quad \mbox{and} \quad T_\alpha(\eta_\beta) = \eta_\beta - \lambda \eta_\alpha.
    \]
\end{corollary}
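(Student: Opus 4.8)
The plan is to deduce all three stated identities by substituting the pairing values of \Cref{prop:alphabeta} into the complex-reflection formula of \Cref{prop:reflection}; the only structural inputs beyond those two results are the Hermitian symmetry $h(y,x) = \overline{h(x,y)}$ and the fact that every eigenvalue $\lambda$ of the $\Z/k\Z$-action on $H_2(Y;\C)$ lies on the unit circle, so $\bar\lambda = \lambda^{-1}$.

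First I would dispatch $T_\beta(\eta_\beta) = \lambda\eta_\beta$. By \Cref{prop:reflection}, $T_\beta$ fixes the $h$-orthogonal complement of $\eta_\beta$ and scales $\eta_\beta$ by $1 - (\lambda - 1)h(\eta_\beta,\eta_\beta)$. The local monodromy of the $k$-fold cyclic cover at a node is the Milnor monodromy of the $A_{k-1}$ surface singularity $\{z^k = xy\}$ created over the node; this has order $k$, with one-dimensional $\lambda$-eigenspace spanned by $\eta_\beta$, so the scalar above must equal $\lambda$ (one sees this directly from the $A_{k-1}$ Milnor lattice, and it is in any case implicit in the construction of $\eta_\beta$ in \Cref{prop:reflection}). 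Comparing the two expressions yields simultaneously $T_\beta(\eta_\beta) = \lambda\eta_\beta$ and the normalization $h(\eta_\beta,\eta_\beta) = -1$; the latter is insensitive to rescaling $\eta_\beta$ by a unit complex number, as in the remark following \Cref{prop:reflection}, so it is compatible with the ambiguity there. The same argument applies with $\alpha$ in place of $\beta$.

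Now assume $i(\alpha,\beta) = 1$ and choose the normalization of \Cref{prop:alphabeta} so that $h(\eta_\alpha,\eta_\beta) = (1-\lambda)^{-1}$. Since $(\lambda-1)(1-\lambda)^{-1} = -1$, the reflection formula for $T_\beta$ gives $T_\beta(\eta_\alpha) = \eta_\alpha - (\lambda-1)h(\eta_\alpha,\eta_\beta)\eta_\beta = \eta_\alpha + \eta_\beta$. For $T_\alpha(\eta_\beta)$, Hermitian symmetry together with $\bar\lambda = \lambda^{-1}$ gives
\[
h(\eta_\beta,\eta_\alpha) = \overline{h(\eta_\alpha,\eta_\beta)} = \overline{(1-\lambda)^{-1}} = (1-\lambda^{-1})^{-1} = \frac{\lambda}{\lambda-1},
\]
so that $(\lambda-1)h(\eta_\beta,\eta_\alpha) = \lambda$ and hence $T_\alpha(\eta_\beta) = \eta_\beta - (\lambda-1)h(\eta_\beta,\eta_\alpha)\eta_\alpha = \eta_\beta - \lambda\eta_\alpha$, as claimed. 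The one non-mechanical point is the identification of the scalar by which $T_\beta$ acts on $\eta_\beta$, equivalently the value $h(\eta_\beta,\eta_\beta) = -1$; I expect this — which ultimately rests on tracing the $\Z/k\Z$ deck action through the $A_{k-1}$ singularity of $Y$ over the node — to be the only step requiring genuine input, everything else being one-line substitutions into the formula of \Cref{prop:reflection}.
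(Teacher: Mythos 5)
Your proof is correct and takes essentially the same route as the paper: direct substitution of the pairing values from \Cref{prop:alphabeta} into the reflection formula of \Cref{prop:reflection}, using Hermitian symmetry and $\bar\lambda = \lambda^{-1}$ for the third identity, together with the normalization $h(\eta_\beta,\eta_\beta) = -1$. That normalization is already built into the proof of \Cref{prop:reflection} (where $\xi_\alpha$ is rescaled to a unit vector and $h(\xi_\alpha,\xi_\alpha)$ is computed to be negative), so your indirect argument via the order-$k$ Milnor monodromy of the $A_{k-1}$ singularity---which as you note requires tracing how $T$ and the deck transformation $\sigma$ interact, and silently fixes the $\lambda$ versus $\bar\lambda$ convention---is a valid alternative justification but can be replaced by a citation to that step.
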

\begin{proof}
    These can be directly computed from \Cref{prop:reflection}, using the determination of $h(\eta_\alpha,\eta_\beta)$ of \Cref{prop:alphabeta}.
\end{proof}

We will require some preliminary results.  We recall that the Picard-Lefschetz theorem for nodal singularities in complex dimension $2$ asserts that the monodromy of a nodal degeneration of an algebraic surface $Y$ is given by
\[
T_\delta(x) = x + (x,\delta)\delta,
\]
where $\delta \in H_2(Y;\Z)$ is the homology class of the vanishing cycle, satisfying $(\delta, \delta) = -2$, where as usual $(\cdot, \cdot)$ denotes the intersection pairing.

Let $\Delta \subset \C$ denote the closed unit disk. Recall that an isolated surface singularity is said to be {\em of type $A_{k-1}$} if it has local equation $x^2 + y^2 + z^k$.

\begin{lemma}\label{lemma:aksing}
    Let $\pi: \cY \to \Delta$ be a family of algebraic surfaces for which the fiber over $0$ has an isolated singularity of type $A_{k-1}$ and every other fiber is smooth. Set $Y = \pi^{-1}(1)$. Then there are vanishing cycles $\delta_1, \dots, \delta_{k-1} \in H_2(Y;\Z)$ such that 
    \[
    (\delta_i, \delta_j) = \begin{cases}  
                    -2 & i= j\\
                    1 & \abs{i-j} = 1\\
                    0 & \mbox{else},
    \end{cases}
    \]
    and such that the local monodromy on $H_2(Y;\Z)$ is given by
    \[
    T:= T_{\delta_{k-1}} \dots T_{\delta_1}.
    \]
    A formula for $T$ is given as
    \begin{equation}\label{equation:T}
    T(v) = v + (v,\delta_1) \delta_1 + (v, \delta_1 + \delta_2) \delta_2 + \dots + (v, \delta_1 + \dots + \delta_{k-1})\delta_{k-1}.
    \end{equation}
\end{lemma}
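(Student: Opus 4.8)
The plan is to treat this as a concrete computation in the theory of simple surface singularities, building on the dimension-$1$ picture for plane-curve singularities of type $A_{k-1}$ transported into dimension $2$ by the Picard--Lefschetz setup already recalled. First I would fix a distinguished basis. A Morsification of the $A_{k-1}$ singularity $x^2 + y^2 + z^k$ produces, on the Milnor fiber $Y = \pi^{-1}(1)$, a distinguished collection of vanishing cycles $\delta_1, \dots, \delta_{k-1} \in H_2(Y;\Z)$; the intersection form of this basis is the $A_{k-1}$ Cartan-type matrix displayed in the statement, and this is precisely the standard computation of the intersection form of $A_{k-1}$ (see \cite{AGZV}), the only new feature relative to the curve case being the shift in the self-intersection from $-1$ to $-2$ forced by the even complex dimension. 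The total monodromy around the deleted disk is the product of the Picard--Lefschetz transformations about the $\delta_i$ taken in the order prescribed by the distinguished basis, i.e. $T = T_{\delta_{k-1}} \cdots T_{\delta_1}$; this ordering convention is exactly what the theory of distinguished bases gives us.

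Next I would establish the closed formula \eqref{equation:T} for $T$. The cleanest route is induction on $k$. Write $T^{(m)} = T_{\delta_m} \cdots T_{\delta_1}$, so $T = T^{(k-1)}$ and $T^{(1)} = T_{\delta_1}$, which matches the claimed formula with only the first term present. For the inductive step, one computes $T^{(m)}(v) = T_{\delta_m}\big(T^{(m-1)}(v)\big)$, expands using $T_{\delta_m}(x) = x + (x,\delta_m)\delta_m$, and must check that the resulting coefficient of $\delta_m$ equals $(v, \delta_1 + \cdots + \delta_m)$ while the coefficients of $\delta_1, \dots, \delta_{m-1}$ are unchanged from $T^{(m-1)}$. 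The coefficient of $\delta_m$ in $T^{(m)}(v)$ is $\big(T^{(m-1)}(v), \delta_m\big)$; since $\delta_m$ is orthogonal to $\delta_1, \dots, \delta_{m-2}$ and $(\delta_{m-1},\delta_m) = 1$, the only surviving contributions are $(v,\delta_m)$ plus the coefficient of $\delta_{m-1}$ in $T^{(m-1)}(v)$, which by induction is $(v, \delta_1 + \cdots + \delta_{m-1})$; summing gives $(v, \delta_1 + \cdots + \delta_m)$ as required. That the lower coefficients are untouched is immediate because applying $T_{\delta_m}$ only adds a multiple of $\delta_m$. This telescoping is the heart of the argument and is a short algebraic verification.

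The genuinely substantive point — and the step I expect to be the main obstacle, in the sense of requiring care rather than cleverness — is the identification of the \emph{geometric} monodromy of the family $\cY \to \Delta$ with this specific word $T_{\delta_{k-1}} \cdots T_{\delta_1}$ in Picard--Lefschetz transformations, with the $\delta_i$ forming a distinguished basis with the stated intersection pattern. This is where one must invoke that (i) the $A_{k-1}$ singularity is simple, so that any Morsification yields $k-1$ nondegenerate critical points and the monodromy factors as an ordered product of Lefschetz transformations, (ii) the vanishing cycles of a distinguished basis can be chosen with the $A_{k-1}$ incidence graph, and (iii) the self-intersections are $-2$ because we are in real dimension $4$, consistent with the Picard--Lefschetz formula recalled just above the lemma (note the $+$ sign there versus the $-$ sign in odd complex dimension). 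I would cite \cite{AGZV} for the structure of distinguished bases and the intersection form of $A_{k-1}$, and then the formula \eqref{equation:T} follows formally by the induction above. It is worth remarking that \eqref{equation:T} also shows $T$ is unipotent of order dividing... no: one checks directly that $T^k = \id$ on the sublattice spanned by the $\delta_i$ when the form is the $A_{k-1}$ form, which is the expected order of the monodromy of an $A_{k-1}$ degeneration, providing a useful consistency check on the formula.
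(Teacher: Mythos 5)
Your proposal is correct and mirrors the paper's (very terse) proof: the paper simply cites \cite[Lemma 2.4, Section 2.9]{AGZV} for the distinguished basis with the $A_{k-1}$ intersection pattern and the factorization of the monodromy as an ordered product of Picard--Lefschetz transformations, and then declares that \eqref{equation:T} ``follows by direct calculation.'' Your induction on $m$ is exactly the direct calculation the paper has in mind, and the telescoping via $(\delta_{m-1},\delta_m)=1$, $(\delta_i,\delta_m)=0$ for $i\le m-2$ is carried out correctly.
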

\begin{proof}
    This is a basic result in singularity theory; see \cite[Lemma 2.4, Section 2.9]{AGZV}. The explicit formula for $T$ follows by direct calculation.
\end{proof}

\begin{lemma}\label{lemma:sigmaVCaction}
    Suppose $C \subset \CP^2$ is a plane curve of degree $d$ with a nodal singularity. Let $Y$ be the $k$-fold cyclic branched cover of $\CP^2$ branched over $C$, for some $k \mid d$. Then $Y$ has a singularity of type $A_{k-1}$ over the node of $C$. With respect to the vanishing cycles $\delta_1, \dots, \delta_{k-1}$ of \Cref{lemma:aksing}, define $\delta_k := -\delta_1 - \dots - \delta_{k-1}$. Then the deck transformation $\sigma: Y \to Y$ satisfies
    \[
    \sigma(\delta_i) = \delta_{i+1},
    \]
    with subscripts taken mod $k$. 
\end{lemma}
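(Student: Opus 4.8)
The plan is to reduce everything to a local computation near the node and to exhibit an explicit Lefschetz fibration on the Milnor fibre of the resulting surface singularity that makes the cyclic symmetry $\sigma$ manifest.

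\textbf{Local model.} Since $C$ has a node at a point $p$, there are local analytic coordinates $(a,b)$ on $\CP^2$ centred at $p$ in which $C = \{ab = 0\}$. Line bundles are locally trivial, so after trivializing $\cL$ (with $\cL^{\otimes k}\cong \cO(C)$) near $p$ we may take the section cutting out $C$ to be $ab$, and then $Y$ is locally $\{w^k = ab\}$ in coordinates $(a,b,w)$, with deck transformation $\sigma(a,b,w) = (a,b,\zeta w)$ for a fixed primitive $k$-th root of unity $\zeta$. The substitution $a = x+iy$, $b = x-iy$ identifies $w^k = ab$ with $x^2 + y^2 = w^k$, the $A_{k-1}$ surface singularity; this gives the first assertion. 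A one-parameter smoothing of the node, locally $\{ab = t\}$, induces the smoothing $\cY = \{w^k = ab - t\}$ of this singularity, whose general fibre, intersected with a Milnor ball, is $M = \{ab = w^k + \epsilon\}$ for $0\neq\epsilon$ small. I will compute $\sigma_*$ using $M$ and the standard radial distinguished basis for $A_{k-1}$; since that basis realizes the intersection pattern and monodromy formula of \Cref{lemma:aksing} (cf. \cite[\S\S 2.8--2.9]{AGZV}), we are entitled to take it as the basis appearing in the statement.

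\textbf{A symmetric Lefschetz fibration.} Projection to the $w$-coordinate exhibits $M$ as a Lefschetz fibration over a disk in $\C_w$: the fibre over $w$ is the affine conic $\{ab = w^k + \epsilon\}\subset\C^2_{a,b}$, smooth and diffeomorphic to $\C^\ast$ unless $w^k = -\epsilon$, where it degenerates to the node $\{ab = 0\}$. The $k$ critical values are thus the $k$-th roots $w_1,\dots,w_k$ of $-\epsilon$, equally spaced on a circle about the origin; label them so that $\zeta w_j = w_{j+1}$ (indices mod $k$). The key observation is that $\sigma$ covers the rotation $w\mapsto\zeta w$ of the base and acts on each fibre by the identity on $(a,b)$, since $(\zeta w)^k = w^k$. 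Hence $\sigma$ fixes the regular value $w=0$, cyclically permutes $w_1,\dots,w_k$, carries the fibre over $w$ isomorphically onto the fibre over $\zeta w$, and in particular fixes (the isotopy class of) the vanishing circle $c$ in the central fibre $F_0 = \{ab=\epsilon\}\cong\C^\ast$.

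\textbf{The basis and its symmetry.} Take radial segments $\gamma_1,\dots,\gamma_k$ from $0$ to $w_1,\dots,w_k$ (slightly shortened to end at regular values), and let $\Theta_j$ be the Lefschetz thimble swept out over $\gamma_j$, oriented so that $\partial\Theta_j = c$ for every $j$. For consecutive indices the $2$-cycles $\delta_i := [\Theta_i - \Theta_{i+1}]$, $i = 1,\dots,k-1$, form the standard radial distinguished basis of $H_2(M;\Z)$ for $A_{k-1}$, matching \Cref{lemma:aksing} (after, if necessary, the harmless relabelling $j\mapsto k+1-j$ that reverses the order of the arcs). Because the $\Theta_j$ share the boundary $c$, the cyclic sum $\sum_{j=1}^k[\Theta_j - \Theta_{j+1}]$ telescopes to $0$; setting $\delta_k := [\Theta_k - \Theta_1]$ this yields $\delta_k = -(\delta_1 + \dots + \delta_{k-1})$, which is exactly the definition of $\delta_k$ in the statement. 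Finally, $\sigma$ is an orientation-preserving diffeomorphism carrying $\gamma_j$ to $\gamma_{j+1}$ and $c$ to $c$, hence $\Theta_j$ to $\Theta_{j+1}$, so $\sigma(\delta_i) = \sigma[\Theta_i - \Theta_{i+1}] = [\Theta_{i+1} - \Theta_{i+2}] = \delta_{i+1}$ for all $i$ mod $k$, as claimed.

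\textbf{Main obstacle.} All the conceptual content sits in the second step; the rest is bookkeeping, and the only point needing genuine care is the orientation accounting in the third: one must check that the thimbles $\Theta_j$ can be consistently oriented with common boundary (using that every essential circle in $\C^\ast$ is isotopic to $c$), that the consecutive cycles so obtained reproduce the distinguished basis of \Cref{lemma:aksing} on the nose (including the monodromy product $T = T_{\delta_{k-1}}\cdots T_{\delta_1}$, up to the allowed reordering of arcs), and that $[\Theta_k - \Theta_1]$ equals $-\sum_{i<k}\delta_i$ exactly rather than only up to an overall sign. These are standard facts about the $A_{k-1}$ Milnor fibre, for which I would cite \cite{AGZV}; I would also remark that the identity $\sigma(\delta_i)=\delta_{i+1}$ is genuinely tied to this particular distinguished basis (a Hurwitz move destroys it), which is why the basis must be pinned down as the radial one.
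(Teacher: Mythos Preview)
Your proof is correct and reaches the conclusion by a different route from the paper. The paper argues via Sebastiani--Thom: writing the local equation as $x^2 + y^2 + z^k$, the double suspension of $z^k$, it observes that in the zero-dimensional Milnor fibre $\bar V = \{z^k = 1\}$ the distinguished vanishing cycles are $\bar\delta_i = \zeta^i - \zeta^{i-1} \in H_0(\bar V;\Z)$, on which $\sigma$ (multiplication by $\zeta$) visibly acts by $\bar\delta_i \mapsto \bar\delta_{i+1}$; the $\sigma$-equivariant Sebastiani--Thom homotopy equivalence then transports this to $H_2$ of the two-dimensional Milnor fibre. You instead stay in complex dimension two and use the Lefschetz fibration given by projection to $w$: the radial thimbles $\Theta_j$ are manifestly permuted cyclically by $\sigma$ (which rotates the base and is the identity on the central fibre), and the $\delta_i$ are their consecutive differences. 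The two pictures are secretly the same geometry---your thimbles $\Theta_j$ realize the join cells $S^1 * \{w_j\}$ in the Sebastiani--Thom model of the Milnor fibre as $S^1 * \{k\text{ points}\}$---but the technical apparatus differs. Your argument is more self-contained (no suspension theorem needed, and the $\sigma$-equivariance is built in rather than extracted from a proof in the literature); the paper's is shorter once Sebastiani--Thom is in hand, reducing the symmetry check to the transparent cyclic action on $k$ points in $\C$. The labelling caveat you flag (a possible reversal $j \mapsto k+1-j$, compensated by swapping $\sigma$ with $\sigma^{-1}$) is genuine but harmless, since neither the statement nor its later uses distinguish a preferred generator of the deck group.
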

\begin{proof}
   Near the nodal point of $C$, there are local coordinates $x,y$ on $\CP^2$ so that $C$ is given by the equation $x^2 + y^2 = 0$. Consequently the local equation for $Y$ is $x^2 + y^2 + z^k = 0$. This is the double suspension of the singularity $z^k = 0$ of type $A_{k-1}$, and hence is itself of type $A_{k-1}$. 

   To understand the interaction between $\sigma$ and the vanishing cycles, we recall (cf. \cite[Theorem 2.15]{AGZV}) that in the level manifold $\bar V:= \{z: z^k=1\}$ of the singularity $z^k = 0$, a distinguished set of vanishing cycles in $H_0(\bar V;\Z)$ is given by taking
   \[
   \bar{\delta}_i = \zeta^i-\zeta^{i-1}
   \]
   for $\zeta = e^{2 \pi i/k} \in \bar V$; the cycles $\bar \delta_1, \dots, \bar \delta_{k-1}$ give a basis. The relation
   \[
   \bar{\delta}_1 + \dots + \bar{\delta}_k = 0
   \]
   holds, and since the action of $\sigma$ on $\bar V$ is given by multiplication by $\zeta$, the required equality $\sigma(\bar{\delta}_i) = \bar{\delta}_{i+1}$ holds. Via the Sebastiani-Thom theorem (cf. \cite[Theorem 2.9]{AGZV}), for $\epsilon$ suitably small, the level set 
   \[
   V:= \{(x,y,z): x^2 + y^2 + z^k=\epsilon\}
   \]
   has the homotopy type of the double suspension of $\bar V$, and moreover from the proof it is clear this is equivariant for the actions of $\sigma$ on $\bar V$ and $V$. Defining $\delta_i$ as the double suspension of $\bar \delta_i$, it follows that $\sigma(\delta_i) = \delta_{i+1}$ holds in $H_2(V;\Z)$. By a careful choice of local level set $V$, the inclusion $V\into Y$ can be made $\sigma$-equivariant, showing that the same relations hold in $H_2(Y;\Z)$ as claimed.
   \end{proof}

\begin{lemma}\label{lemma:stcommute}
    The automorphisms $\sigma, T$ of $H_2(Y;\Z)$ commute.
\end{lemma}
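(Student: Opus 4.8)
The plan is to combine the two explicit descriptions already available. By \Cref{lemma:aksing}, $T$ is given by \eqref{equation:T}, i.e.
\[
T(v) = v + \sum_{i=1}^{k-1}\bigl(v,\ \delta_1 + \dots + \delta_i\bigr)\,\delta_i,
\]
and by \Cref{lemma:sigmaVCaction} one has $\sigma(\delta_i) = \delta_{i+1}$ with indices read mod $k$ and $\delta_k = -\delta_1 - \dots - \delta_{k-1}$. I will also use that $\sigma$, being a holomorphic (hence orientation-preserving) automorphism of the complex surface $Y$, preserves the intersection pairing: $(\sigma x, \sigma y) = (x, y)$ for all $x, y \in H_2(Y;\Z)$.

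\textbf{Step 1: reduce to an identity among transvections.} For any class $\delta$, the Picard--Lefschetz formula $T_\delta(x) = x + (x,\delta)\delta$ together with $\sigma$-invariance of the form gives $\sigma T_\delta \sigma^{-1} = T_{\sigma(\delta)}$, since $\sigma T_\delta \sigma^{-1}(x) = \sigma\bigl(\sigma^{-1}x + (\sigma^{-1}x,\delta)\delta\bigr) = x + (x,\sigma\delta)\,\sigma\delta$. Applying this to each factor of $T = T_{\delta_{k-1}}\cdots T_{\delta_1}$ and substituting $\sigma(\delta_i) = \delta_{i+1}$ yields $\sigma T \sigma^{-1} = T_{\delta_k}T_{\delta_{k-1}}\cdots T_{\delta_2}$. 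Thus $\sigma T = T\sigma$ will follow once I show $T_{\delta_k}T_{\delta_{k-1}}\cdots T_{\delta_2} = T_{\delta_{k-1}}\cdots T_{\delta_1}$.

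\textbf{Step 2: verify this identity.} Write $N := T_{\delta_{k-1}}\cdots T_{\delta_2}$ (trivial for $k \le 2$); since $T_{\delta_k}T_{\delta_{k-1}}\cdots T_{\delta_2} = T_{\delta_k}N$ and $T_{\delta_{k-1}}\cdots T_{\delta_1} = NT_{\delta_1}$, the identity to be shown is $T_{\delta_k} = NT_{\delta_1}N^{-1}$, which equals $T_{N(\delta_1)}$ (again by conjugation, as $N$ is an isometry of the intersection form, being a product of transvections). So it suffices to show $N(\delta_1) = -\delta_k$. A short induction on $j$, using $(\delta_i,\delta_j) = 1$ for $\abs{i-j}=1$ and $(\delta_i,\delta_j) = 0$ for $\abs{i-j}\ge 2$ among indices in $\{1,\dots,k-1\}$, shows $T_{\delta_j}T_{\delta_{j-1}}\cdots T_{\delta_2}(\delta_1) = \delta_1 + \dots + \delta_j$ for every $1 \le j \le k-1$; with $j = k-1$ this gives $N(\delta_1) = \delta_1 + \dots + \delta_{k-1} = -\delta_k$ by the defining relation for $\delta_k$ in \Cref{lemma:sigmaVCaction}. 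The degenerate cases $k = 1$ ($T = \id$) and $k = 2$ ($T = T_{\delta_1}$, $\sigma(\delta_1) = -\delta_1$) follow directly from the same formulas.

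There is no genuine obstacle here: the content is entirely the two input lemmas plus bookkeeping, and the only point meriting a word is the orientation-preservation of $\sigma$ used in Step 1. Alternatively one can bypass the transvection manipulation and directly expand $\sigma T(v)$ and $T\sigma(v)$ from the two formulas above, reindex the resulting sums, and substitute $\delta_1 + \dots + \delta_{k-1} = -\delta_k$; this is the same computation in slightly different dress.
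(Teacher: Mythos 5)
Your proof is correct and follows essentially the same route as the paper: both reduce the claim to the observation that $\sigma T \sigma^{-1} = T_{\delta_k}T_{\delta_{k-1}}\cdots T_{\delta_2}$ and then certify the residual identity by computing how a product of the $T_{\delta_i}$ moves $\delta_1$. The only cosmetic difference is that the paper reads off $T(\delta_1) = \delta_k$ directly from \eqref{equation:T} and uses $T_{\delta_k}TT_{\delta_1}^{-1}T^{-1} = T_{\delta_k}T_{\delta_k}^{-1}$, whereas you compute $N(\delta_1) = -\delta_k$ for $N := TT_{\delta_1}^{-1}$ by an explicit induction; these are equivalent bits of bookkeeping.
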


\begin{proof}
For a diffeomorphism $f: Y \to Y$ and a Dehn twist $T_\delta$, there is an equality
\[
f_* T_\delta f_*^{-1} = T_{f_*(\delta)}
\]
of automorphisms of $H_2(Y;\Z)$. Thus by \Cref{lemma:sigmaVCaction},
\[
\sigma T \sigma^{-1} = \sigma (T_{\delta_{k-1}} \dots T_{\delta_1}) \sigma^{-1} = T_{\delta_k} \dots T_{\delta_2} = T_{\delta_k} T T_{\delta_1}^{-1}.
\]
From \eqref{equation:T}, $T(\delta_1) = \delta_k$,
so that
\[
\sigma T \sigma^{-1} =  T_{\delta_k} T T_{\delta_1}^{-1} = T_{\delta_k} T T_{\delta_1}^{-1} T^{-1} T = T_{\delta_k} T_{\delta_k}^{-1} T = T
\]
as claimed.
\end{proof}

Let $\lambda$ be a $k^{th}$ root of unity, and define the projection operator $P_\lambda: H_2(Y;\C) \to H_2(Y)_\lambda$ via
\[
P_\lambda(v) = \tfrac{1}{k}\sum_{i = 1}^k \bar \lambda^i \sigma^i.
\]

\begin{lemma}\label{lemma:hformula}
    Let $v \in H_2(Y)_\lambda$ and $w \in H_2(Y;\C)$ be given. Then
    \[
    h(v,P_\lambda(w)) = h(v,w).
    \]
\end{lemma}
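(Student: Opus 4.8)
The plan is to prove the identity by a direct computation, expanding the definition of $P_\lambda$ and tracking how the Hermitian form interacts with the powers of $\sigma$. Writing out $P_\lambda(w) = \tfrac1k \sum_{i=1}^k \bar\lambda^i \sigma^i(w)$ and using that $h(x,y) = (x,\bar y)$ is conjugate-linear in the second variable, one gets
\[
h(v, P_\lambda(w)) = \tfrac1k \sum_{i=1}^k \lambda^i\, h\bigl(v, \sigma^i(w)\bigr),
\]
so the task reduces to evaluating $h(v, \sigma^i(w))$ for $v \in H_2(Y)_\lambda$ and arbitrary $w$.

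The key steps are as follows. First, $\sigma$ acts on $H_2(Y;\Z)$, so its action on $H_2(Y;\C)$ is by an integral (hence real) matrix, and therefore commutes with complex conjugation; hence $\overline{\sigma^i(w)} = \sigma^i(\bar w)$ and $h(v,\sigma^i(w)) = (v, \sigma^i(\bar w))$. Second, $\sigma$ is a holomorphic, in particular orientation-preserving, diffeomorphism of the $4$-manifold $Y$, so it preserves the cup-product pairing; thus $(v, \sigma^i(\bar w)) = (\sigma^{-i}(v), \bar w)$. Third, $v \in H_2(Y)_\lambda$ means $\sigma(v) = \lambda v$, and since $\lambda$ is a $k$th root of unity we have $\lambda^{-1} = \bar\lambda$, so $\sigma^{-i}(v) = \bar\lambda^i v$ and therefore $h(v, \sigma^i(w)) = \bar\lambda^i (v, \bar w) = \bar\lambda^i h(v,w)$. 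Substituting back,
\[
h(v, P_\lambda(w)) = \tfrac1k \sum_{i=1}^k \lambda^i \bar\lambda^i\, h(v,w) = \tfrac1k \sum_{i=1}^k h(v,w) = h(v,w),
\]
using $\lambda^i\bar\lambda^i = |\lambda|^{2i} = 1$.

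There is no real obstacle here: the argument is entirely formal once one records the two structural facts that $\sigma$ preserves the intersection pairing and commutes with conjugation, both immediate from $\sigma$ being an automorphism of $Y$ acting by an integral matrix on $H_2$. The only point requiring care is the bookkeeping of complex conjugates — in particular, remembering that conjugate-linearity in the second slot of $h$ converts the factor $\bar\lambda^i$ appearing in $P_\lambda$ into $\lambda^i$, which is exactly what cancels the $\bar\lambda^i$ produced by moving $\sigma^{-i}$ onto $v$.
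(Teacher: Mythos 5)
Your proof is correct and follows essentially the same route as the paper: reduce to the identity $h(v,\sigma^i w)=\bar\lambda^i h(v,w)$ and then sum against the coefficients $\bar\lambda^i$ of $P_\lambda$, using conjugate-linearity of $h$ in the second slot. The only difference is that the paper simply asserts $\sigma$ is Hermitian (unitary) for $h$, whereas you usefully spell out why, via $\sigma$ being integral (hence commuting with conjugation) and orientation-preserving (hence preserving the intersection form).
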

\begin{proof}
    The action of $\sigma$ is Hermitian for $h$, and so
    \[
    h(v,\sigma w) = h(\sigma^{-1}v,w) = \bar \lambda h(v,w),
    \]
    since $v \in H_2(Y)_\lambda$. Since $h(\cdot, \cdot)$ is conjugate-linear in the second argument, the claim now follows from the definition 
    \[
    P_\lambda = \tfrac{1}{k}\sum_{i = 1}^k \bar \lambda^i \sigma^i.
    \]
\end{proof}

We are now in position to prove \Cref{prop:reflection}.

\begin{proof}[Proof of \Cref{prop:reflection}]
Let $\delta_1, \dots,\delta_{k-1}$ be the vanishing cycles in $Y$ associated to the nodal degeneration of $C$ with vanishing cycle $\alpha$, as in \Cref{lemma:aksing}. Define
\[
\xi_\alpha := P_\lambda(\delta_1).
\]
Since $\sigma, T$ commute (\Cref{lemma:stcommute}), $T$ restricts to an automorphism of each eigenspace $H_2(Y)_\lambda$. We will establish the formula
\[
T(v) = v + (\bar \lambda - 1) \frac{h(v,\xi_\alpha)}{h(\xi_\alpha, \xi_\alpha)} \xi_\alpha
\]
for the action of $T$ on $H_2(Y)_\lambda$. The formula of \Cref{prop:reflection} will then follow by normalizing $\xi_\alpha$ to a unit vector $\eta_\alpha$, with the sign change introduced by the fact (to be seen in \eqref{equation:xisign} below) that $h(\xi_\alpha, \xi_\alpha) < 0$. The factor $\bar \lambda$ can be replaced with $\lambda$ at the cost of exchanging $T$ for $T^{-1}$; the ensuing arguments are unaffected. 

Since $\delta_i = \sigma^{i-1}\delta_1$, the term $(v,\delta_1 + \dots + \delta_i)$ appearing in \eqref{equation:T} can be written as
\begin{align*}
    (v,\delta_1 + \dots + \delta_i) &= h(v,\delta_1 + \dots + \delta_i)\\
    &= h(v, (1 + \sigma + \dots + \sigma^{i-1})\delta_1)\\
    &= (1 + \bar \lambda + \dots \bar \lambda^{i-1})h(v, \delta_1)\\
    &= (1 + \bar \lambda + \dots \bar \lambda^{i-1})h(v, \xi_\alpha),
\end{align*}
with the latter two equalities holding by \Cref{lemma:hformula} and its proof. \eqref{equation:T} can now be rewritten as
\begin{align*}
    T(v) &= v + h(v, \xi_\alpha)(\delta_1 + (1+ \bar \lambda) \delta_2 + \dots + (1 + \dots + \bar\lambda^{k-2})\delta_{k-1})\\
    &= v + h(v, \xi_\alpha)(1 + (1 + \bar \lambda) \sigma + \dots + (1 + \dots + \bar \lambda^{k-2}) \sigma^{k-2})\delta_1.
\end{align*}

We claim that the operator
\[
Q_\lambda := (1 + (1 + \bar \lambda) \sigma + \dots + (1 + \dots + \bar \lambda^{k-2}) \sigma^{k-2})
\]
appearing above satisfies the identity
\[
Q_\lambda = \frac{k}{1-\lambda}P_\lambda.
\]
Indeed, since $\sum_{i = 0}^{k-1} \sigma^i = 0$ on $H_2(Y)_\lambda$,
\begin{align*}
    P_\lambda &= \tfrac{1}{k}(1 + \bar \lambda \sigma + \dots + \bar \lambda^{k-1} \sigma^{k-1})\\
    &= \tfrac{1}{k}( (1-\lambda) + (\bar \lambda - \lambda) \sigma + \dots + (\bar \lambda^{k-2} - \lambda)\sigma^{k-2}).
\end{align*}
The claim now follows from the identity 
\[
(\bar \lambda^i - \lambda) = (1-\lambda)(1 + \dots + \bar \lambda^{i}).
\]

Returning to $T(v)$, 
\begin{align*}
    T(v) &= v + h(v, \xi_\alpha)(1 + (1 + \bar \lambda) \sigma + \dots + (1 + \dots + \bar \lambda^{k-2}) \sigma^{k-2})\delta_1\\
    &= v + \frac{k}{1-\lambda}h(v, \xi_\alpha)P_\lambda(\delta_1)\\
    &= v + \frac{k}{1-\lambda}h(v, \xi_\alpha) \xi_\alpha\\
    &= v + \frac{k h(\xi_\alpha, \xi_\alpha)}{1 - \lambda} \frac{h(v, \xi_\alpha)}{h(\xi_\alpha, \xi_\alpha)}\xi_\alpha.
\end{align*}
To complete the proof, we will show that 
\[
\frac{k h(\xi_\alpha, \xi_\alpha)}{1 - \lambda} = (\bar \lambda-1),
\]
or equivalently,
\begin{equation} \label{equation:xisign}
    h(\xi_\alpha, \xi_\alpha) = \tfrac{1}{k}(1-\lambda)(\bar \lambda-1).
\end{equation}
Note that $\tfrac{1}{k}(1-\lambda)(\bar \lambda-1) = \tfrac{2}{k}(\Re(\lambda)-1) < 0$ as claimed above.

From \Cref{lemma:hformula}, $h(\xi_\alpha, \xi_\alpha) = h(\xi_\alpha, P_\lambda(\delta_1)) = h(\xi_\alpha, \delta_1)$. Expanding,
\begin{align*}
    \xi_\alpha = P_\lambda(\delta_1) &= \tfrac{1}{k}(\delta_1 + \bar \lambda \delta_2 + \dots + \bar \lambda^{k-1} \delta_k)\\
    &= \tfrac{1}{k}((1-\lambda) \delta_1 + (\bar \lambda - \lambda) \delta_2 + \dots + (\bar \lambda^{k-2}-\lambda) \delta_{k-1}).
\end{align*}
Then by \Cref{lemma:aksing}, 
\begin{align*}
    h(\xi_\alpha, \xi_\alpha) = h(\xi_\alpha, \delta_1) &= h(\tfrac{1}{k}((1-\lambda) \delta_1 + (\bar \lambda - \lambda) \delta_2 + \dots + (\bar \lambda^{k-2}-\lambda) \delta_{k-1}), \delta_1)\\
    &= \tfrac{1}{k}(-2(1-\lambda) + (\bar \lambda - \lambda))\\
    &= \frac{\lambda + \bar \lambda -2}{k}\\
    &= \frac{(1-\lambda)(\bar \lambda - 1)}{k}.
\end{align*}
\end{proof}

\begin{proof}[Proof of \Cref{prop:alphabeta}]
   Suppose first that $i(\alpha,\beta) = 0$. The distinguished basis of vanishing cycles $\delta_{\alpha,1}, \dots, \delta_{\alpha,k-1} \subset Y$ induced from the nodal degeneration of $C$ with vanishing cycle $\alpha$ are supported near the preimage of $\alpha$ in $Y$, and likewise for the vanishing cycles $\delta_{\beta,1}, \dots, \delta_{\beta,k-1}$ induced by degenerating $C$ along $\beta$. Since $i(\alpha,\beta) = 0$, it follows that $(\delta_{\alpha,i},\delta_{\beta,j}) = 0$ for all $i, j$, and hence $h(\eta_\alpha, \eta_\beta) = 0$ as claimed.

Next suppose that $i(\alpha,\beta) = 1$. We claim that there is a cuspidal degeneration of $C$ for which $\alpha, \beta$ forms a distinguished basis of vanishing cycles. To see this, observe that any cuspidal singularity of $C$ admits a distinguished basis $\gamma, \delta$ of vanishing cycles with $i(\gamma,\delta) = 1$. Moreover, with respect to the distinguished $r$-spin structure $\phi$ on $C$, each of $\gamma, \delta$ is admissible. By \Cref{theorem:monodromy}, the monodromy group of the family of smooth plane curves of degree $d$ is the full associated $r$-spin mapping class group $\Mod(C)[\phi]$ (here $r = d-3$). By the change-of-coordinates principle for $r$-spin mapping class groups (\cite[Proposition 2.15]{strata3}), $\Mod(C)[\phi]$ acts transitively on pairs of admissible curves $\gamma,\delta$ with $i(\gamma,\delta) = 1$. Let $f$ be a path in $\discL$ terminating at a cusp, and let $g \in \pi_1(\discL)$ be a loop whose monodromy takes the distinguished basis for this cuspidal singularity to $\alpha,\beta$. Then the concatenation $f g^{-1}$ induces a cuspidal degeneration for which $\alpha, \beta$ is a distinguished basis of vanishing cycles as claimed. 

A local equation for a cusp is given by $x^2 + y^3 = 0$, and so a local equation for the corresponding singularity of $Y$ is given by $x^2 + y^3 + z^k = 0$. Once more following \cite[Section 2.9]{AGZV}, there is a distinguished basis of vanishing cycles of the following form: the cycles are given as
\[
\delta_{\alpha,1}, \dots, \delta_{\alpha,k-1}, \delta_{\beta,1}, \dots, \delta_{\beta,k-1},
\]
and the intersection form is described as
\begin{align*}
    (\delta_{\alpha, i}, \delta_{\alpha,i}) = (\delta_{\beta, i}, \delta_{\beta,i})= -2,\\
    (\delta_{\alpha,i},\delta_{\alpha,i+1}) = (\delta_{\beta,i},\beta{\alpha,i+1}) = 1,\\
    (\delta_{\alpha,i}, \delta_{\beta,i}) = 1,\\
    (\delta_{\alpha,i}, \delta_{\beta,i+1}) = -1,
\end{align*}
with all other intersections zero. Moreover, $\delta_{\alpha,1}, \dots, \delta_{\alpha,k-1}$ form a distinguished basis of vanishing cycles for the nodal degeneration of $C$ with vanishing cycle $\alpha$, and likewise for $\beta$. With $\xi_\alpha, \xi_\beta$ as in \Cref{prop:reflection}, a direct computation with the intersection form then shows that
\[
h(\xi_\alpha, \xi_\beta) = \frac{1-\lambda}{k}.
\]
Normalizing by $h(\xi_\alpha, \xi_\alpha) = h(\xi_\beta, \xi_\beta) = \frac{(1-\lambda)(\bar \lambda - 1)}{k}$, the formula
\[
h(\eta_\alpha, \eta_\beta) = \frac{1}{1-\bar \lambda}
\]
now follows.
\end{proof}

\section{\Cref{main:large}: large monodromy kernel for plane curves}\label{section:thmB}

In this section we complete the proof of \Cref{main:large}. We first establish some notation. Recall from \Cref{section:cyclic} the monodromy representations
\[
\rho_{top}: \cP_d \to \Mod(C)
\]
and
\[
\rho_{alg}: \cP_d \to \Sp_{2g}(\Z).
\]
Define 
\[
K_d^{top} = \ker(\rho_{top}) \qquad \mbox{and} \qquad K_d^{alg} = \ker(\rho_{alg}).
\]

\subsection{Constructing monodromy kernel}

The objective of this subsection is to prove the proposition below.

\begin{proposition}\label{prop:infiniteimg}
    For all $d \ge 4$ and all $\lambda$ a $k^{th}$ root of unity for $k \mid d$ and $k \ge 4$, there is $W \in K_d^{top}$ such that $\rho'_\lambda(W) \in \Aut(H_2(Y)_\lambda)$ has infinite order.
\end{proposition}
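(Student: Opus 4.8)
The plan is to produce the required element $W$ explicitly from Wajnryb's kernel element for the $E_6$ singularity, transported into the plane-curve setting via the $E_6$ configuration furnished by \Cref{lemma:E6}. First I would recall Wajnryb's relation: there is a word $W$ in the Dehn twists $T_{a_0}, \dots, T_{a_5}$ about a type-$E_6$ configuration of vanishing cycles which is nontrivial in the mapping class group of the fiber but which, being a consequence of the full set of relations among the local vanishing cycles (equivalently, acting trivially on the Milnor fiber), bounds a disk in the appropriate sense — concretely, $\rho_{top}$ of the $E_6$-versal-deformation loop carries $W$ to a mapping class that is trivial after the natural inclusion of the Milnor fiber into $C$. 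One subtlety is that \Cref{lemma:E6} gives vanishing cycles on $C$, and one must check that Wajnryb's word, when written in the Dehn twists about \emph{these} curves, lies in $K_d^{top}$; this is exactly where the monodromy result \cite{saltertoric} (via \Cref{theorem:monodromy} and the change-of-coordinates principle) is used, to embed Wajnryb's local $E_6$ picture into the global family so that the local relation $W = 1$ in $\Mod(C)$ persists. So the first half of the argument gives $W \in K_d^{top}$ directly.

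Next I would compute $\rho'_\lambda(W)$ using \Cref{corollary:monformula}. By \Cref{prop:alphabeta}, the classes $\eta_{a_0}, \dots, \eta_{a_5} \in H_2(Y)_\lambda$ satisfy $h(\eta_{a_i}, \eta_{a_j}) = (1-\bar\lambda)^{-1}$ or $0$ according to adjacency in the $E_6$ Dynkin diagram, and each $T_{a_i}$ acts on the span $V := \langle \eta_{a_0}, \dots, \eta_{a_5}\rangle$ by the complex reflection of \Cref{prop:reflection}. Thus $\rho'_\lambda$ restricted to $V$ is (up to the scaling ambiguity in the $\eta_{a_i}$, which is harmless) a specialization of the $E_6$ braid/Hecke representation: the image is generated by six complex reflections whose Gram matrix is determined by the $E_6$ diagram and the parameter $\lambda$. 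Then $\rho'_\lambda(W)$ acts on $V$ as the corresponding word in these reflections. The key point is that for $\lambda$ a primitive $k$-th root of unity with $k \ge 4$ this specialized $E_6$ reflection representation is \emph{not} of finite type — the relevant complex reflection group is infinite — and Wajnryb's word $W$, which is trivial in $\Mod(C)$ but whose image under the \emph{homological} $E_6$ representation (the case $k=2$, $\lambda=-1$) is already known to be nontrivial, in fact has infinite order in the $k \ge 4$ specialization. Concretely I would exhibit a vector in $V$ on which $\rho'_\lambda(W)$ acts with an eigenvalue that is not a root of unity, or show the $\langle$length of $\rho'_\lambda(W)^n v\rangle$ grows, e.g. by finding an invariant plane on which $\rho'_\lambda(W)$ acts as a hyperbolic or parabolic element of $\PSL_2$.

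The main obstacle I anticipate is the last step: verifying that the \emph{specific} Wajnryb word has infinite order under $\rho'_\lambda$, not merely that the ambient group is infinite. The cleanest route is to reduce to a rank-$2$ or rank-$3$ subcomputation. Wajnryb's element is built so that it acts trivially on the $E_6$ lattice quotiented appropriately but detects a nontrivial central-type phenomenon; passing to the eigenspace $H_2(Y)_\lambda$ "deforms" the $-2$ self-intersections to $h(\eta,\eta) = \frac1k(1-\lambda)(\bar\lambda-1) = \frac2k(\Re\lambda - 1)$, breaking the finiteness. I would isolate a $2$-dimensional $\rho'_\lambda(W)$-invariant subspace of $V$ spanned by two of the $\eta_{a_i}$ (or suitable combinations along a sub-$A_2$ of the diagram), compute the $2\times 2$ matrix of $\rho'_\lambda(W)$ there directly from \Cref{corollary:monformula}, and read off its trace $\tau(\lambda)$ as an explicit function of $\lambda$; then $|\tau(\lambda)| > 2$ (or $\tau(\lambda) = \pm 2$ with nontrivial unipotent part) for the relevant $\lambda$ gives infinite order. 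A sanity check is that at $k = 2$, $\lambda = -1$, this must reproduce the nontriviality already implicit in \cite{CT, wajnryb}; at $k = 3$ one expects possibly finite order (explaining the hypothesis $k \ge 4$), and for $k \ge 4$ the trace computation should land strictly outside $[-2,2]$. Once $\rho'_\lambda(W)$ has infinite order on $V \subseteq H_2(Y)_\lambda$, it has infinite order in $\Aut(H_2(Y)_\lambda)$, completing the proof.
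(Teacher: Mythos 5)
Your proposal follows essentially the same route as the paper: transport Wajnryb's $E_6$ word into the global family via \Cref{lemma:E6}, verify $\rho_{top}(W)=1$, restrict $\rho'_\lambda$ to the span $V = \langle \eta_{a_0},\dots,\eta_{a_5}\rangle$, and certify infinite order by a trace/eigenvalue criterion. Two execution details differ from what you sketch. First, the paper does not need to ``transfer a local relation from the Milnor fiber'': once the $E_6$ configuration of vanishing cycles on $C$ is in hand, the identity $\rho_{top}(W)=1$ is verified directly in $\Mod(C)$, because $\rho_{top}(B)$ carries $a_0$ to a curve $a_0'$ with $i(a_0,a_0')=1$, so $T_{a_0}$ and $T_{a_0'}$ satisfy the braid relation and $W$ collapses. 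Second, your plan to isolate a $2$-dimensional invariant subspace spanned by two of the $\eta_{a_i}$ (or a sub-$A_2$) will not go through: $\bar W$ does not preserve any coordinate plane in $V$. What the paper actually does is compute the full $6\times 6$ matrix of $\bar W$ on $V$ by computer, observe that its characteristic polynomial factors as $(x-1)^4\bigl(x^2 + P(\lambda)x + 1\bigr)$ for an explicit degree-$18$ Laurent polynomial $P$, and then check numerically that $P(e^{i\theta}) < -2$ on $[\pi/2, 4\pi/5]$ — i.e.\ your ``trace outside $[-2,2]$'' criterion, but applied to a rank-$2$ piece that only emerges after the computation and only for $\lambda = e^{2\pi i p/q}$ with $1/4 \le p/q \le 2/5$ (a narrower range than the proposition's statement, but enough for \Cref{main:large}). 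So the conceptual skeleton of your argument is exactly right; the last step is just genuinely a computer calculation rather than a hand-sized sub-$A_2$ reduction.
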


\para{Constructing the element} The particular element $W$ we study was discovered by Wajnryb \cite{wajnryb}, who used it to show that the topological monodromy of an isolated plane curve singularity of type $E_6$ is not injective. Here we see that Wajnryb's element serves the same purpose in our setting.

Let $a_0, \dots, a_5$ be the set of vanishing cycles of \Cref{lemma:E6} arranged in the $E_6$ configuration. Let $\eta_i \in H_2(Y)_\lambda$ be the class corresponding to $a_i$ as in \Cref{prop:reflection}, and let $M_i \in \Aut(H_2(Y)_\lambda)$ be the corresponding monodromy element. Normalize $\eta_0, \dots, \eta_5$ so that whenever $i < j$ and $h(\eta_i, \eta_j) \ne 0$,
\[
h(\eta_i,\eta_j) = (1-\lambda)^{-1}.
\]

    Set
    \[
    B := M_3 M_2 M_4 M_3 M_1 M_5 M_2 M_4 M_3,
    \]
    \[
    M_0' := B M_0 B^{-1},
    \]
    and
    \[
    W := (M_0 M_0' M_0)(M_0' M_0 M_0')^{-1}.
    \]

\begin{lemma}\label{lemma:wkernel}
    For all $d \ge 4$, $\rho_{top}(W) = 1$ in $\Mod_g$. 
\end{lemma}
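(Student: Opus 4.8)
The plan is to reduce the statement to a purely local computation inside the mapping class group of a subsurface of $C$, namely a regular neighborhood $S_{E_6}$ of the union $a_0 \cup \dots \cup a_5$ of the $E_6$ configuration of vanishing cycles supplied by Lemma \ref{lemma:E6}. Since each $M_i$ is (by Proposition \ref{prop:reflection} and the setup) the image under $\rho_{top}$ of the Dehn twist $T_{a_i}$ about $a_i$, and these twists are all supported in $S_{E_6}$, the element $W$ is the image of the word $w := (T_{a_0} T_{a_0'} T_{a_0})(T_{a_0'} T_{a_0} T_{a_0'})^{-1}$, where $a_0' = b(a_0)$ with $b$ the product of twists defining $B$. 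All of these are supported in $S_{E_6}$. Thus it suffices to show that $w = 1$ in $\Mod(S_{E_6})$ (relative to the boundary), since $\Mod(S_{E_6}) \to \Mod(C)$ is a homomorphism and $\rho_{top}(W) = w$ under the inclusion-induced map.

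The next step is to identify this with Wajnryb's relation. Wajnryb \cite{wajnryb} works with the versal deformation of the $E_6$ plane curve singularity, whose Milnor fiber is exactly a surface carrying an $E_6$ configuration of simple closed curves with the stated intersection pattern, and whose vanishing-cycle Dehn twists generate the monodromy; the braid relations among the $T_{a_i}$ corresponding to the edges of the $E_6$ Dynkin diagram hold. In that setting Wajnryb exhibits precisely the word $w$ as a nontrivial element of the kernel of the \emph{algebraic} monodromy that is nonetheless trivial as a mapping class — equivalently, he shows $w = 1$ in the relevant mapping class group. (The braid relation $T_x T_y T_x = T_y T_x T_y$ for $i(x,y)=1$ shows $(T_{a_0}T_{a_0'}T_{a_0})(T_{a_0'}T_{a_0}T_{a_0'})^{-1}$ would be trivial if $i(a_0, a_0')=1$, so the content is exactly that $a_0$ and $a_0'$, despite arising from the $E_6$ braiding $B$, are configured so that $w$ collapses in $\Mod$ — this is Wajnryb's computation.) The change-of-coordinates principle for $r$-spin mapping class groups (used already in the proof of Lemma \ref{lemma:E6}) guarantees that the $E_6$ configuration in $C$ is, up to an element of $\Mod(C)[\phi]$, carried to the standard one in the Milnor fiber picture, so Wajnryb's local identity $w = 1$ transports verbatim.

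Concretely, I would carry out the argument as follows. First, record that $M_i = \rho_{top}(T_{a_i})$ for each $i$, hence $B = \rho_{top}(b)$ and $M_0' = \rho_{top}(T_{b(a_0)})$, using the standard conjugation formula $g T_c g^{-1} = T_{g(c)}$; therefore $W = \rho_{top}(w)$ with $w$ the word above in the twists $T_{a_0}, T_{b(a_0)}$. Second, observe all curves involved lie in $S_{E_6}$, so it suffices to prove $w = 1$ in $\Mod(S_{E_6})$. Third, invoke the change-of-coordinates principle to match $(a_0, \dots, a_5)$ with a standard $E_6$ configuration inside the $E_6$ Milnor fiber. Fourth, cite Wajnryb's computation \cite{wajnryb} that this exact word is trivial in the mapping class group of that surface. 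The main obstacle is the bookkeeping in the third and fourth steps: making sure the conventions (orientations, ordering of the $M_i$, which pairs satisfy $i(a_i,a_j)=1$, and the precise form of $B$) agree with Wajnryb's so that the word $w$ here is literally his, rather than an inverse or a conjugate — a discrepancy there would not change triviality but must be checked. I expect this to be routine once the dictionary between the $E_6$ Dynkin labeling used in Lemma \ref{lemma:E6} and Wajnryb's is fixed.
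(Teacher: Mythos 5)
Your approach matches the paper's: in both, the crux is to observe that $\rho_{top}(B)$ carries $a_0$ to a curve $a_0'$ with $i(a_0,a_0')=1$, after which the braid relation $T_{a_0}T_{a_0'}T_{a_0}=T_{a_0'}T_{a_0}T_{a_0'}$ in $\Mod(C)$ immediately gives $\rho_{top}(W)=1$. The paper's proof is two sentences long and simply asserts that this intersection number follows from a direct calculation (which is indeed Wajnryb's); your scaffolding about the subsurface $S_{E_6}$ and the change-of-coordinates principle is harmless but superfluous, since the relevant intersection number is intrinsic to the $E_6$ configuration and does not require passing to Wajnryb's Milnor fiber model. One small imprecision in your write-up: you describe $M_i$ as ``the image under $\rho_{top}$ of the Dehn twist $T_{a_i}$,'' but $\rho_{top}$ has source $\pi_1(\cP_d)$ and target $\Mod(C)$; the intended reading is that $M_i$ names (abusively) a loop in $\pi_1(\cP_d)$ realizing the nodal degeneration along $a_i$, whose image under $\rho_{top}$ is $T_{a_i}$ and whose image under $\rho'_\lambda$ is the complex reflection of Proposition~\ref{prop:reflection}. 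This does not affect the correctness of the argument.
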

\begin{proof}
    A direct calculation shows that $b:= \rho_{top}(B)$ takes the curve $a_0$ to a curve $a_0'$ for which $i(a_0, a_0') = 1$. Consequently $\rho_{top}(M_0') = b T_{a_0} b^{-1} = T_{a_0'}$ satisfies the braid relation with $\rho_{top}(M_0) = T_{a_0}$; equivalently $\rho_{top}(W) = 1$.
\end{proof}

\para{Certifying nontriviality} 
Let $V \le H_2(Y)_\lambda$ denote the span of $\eta_0, \dots, \eta_5$, and note that $V$ is invariant under $M_0, \dots, M_5$. It therefore suffices to show that the restriction of $\rho'_\lambda(W)$ to $V$ has infinite order. As a standing convention, we write $\bar{M_i}$ to denote the restriction of $\rho'_\lambda(W)$ to $V$, and extend this convention to include the products $B, M_0'$, and $W$.  
\begin{lemma}\label{lemma:eigenvalues}
    The eigenvalues of $\bar W$ are $1$ (with multiplicity $4$) and the roots of $x^2 + P(\lambda) x + 1$ for 
    \begin{align*}
    P(\lambda) = \frac{1}{\lambda^9}(&-\lambda ^{18}+3 \lambda ^{17}-\lambda ^{16}-8 \lambda ^{15}+13 \lambda ^{14}+\lambda ^{13}-23 \lambda ^{12}+20 \lambda ^{11}+12 \lambda ^{10}-34 \lambda ^9 \\ &+12 \lambda ^8+20 \lambda ^7-23 \lambda ^6+\lambda ^5+13 \lambda ^4-8 \lambda ^3-\lambda ^2+3 \lambda -1).
    \end{align*}
\end{lemma}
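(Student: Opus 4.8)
The plan is to compute $\bar W$ explicitly as a $6 \times 6$ matrix over $\Q(\lambda)$ (or rather over the relevant cyclotomic field), and then read off its characteristic polynomial. Everything reduces to linear algebra once we have a basis: we take $\eta_0, \dots, \eta_5$ as a (spanning, and for the generic $\lambda$ presumably linearly independent) set for $V$, and use \Cref{prop:reflection} together with the normalization $h(\eta_i, \eta_j) = (1-\lambda)^{-1}$ when $i < j$ and the pair is adjacent in the $E_6$ diagram (and $h(\eta_i,\eta_i) = 1$, $h(\eta_i,\eta_j) = 0$ for non-adjacent $i \ne j$) to write down each $\bar M_i$. Concretely, $\bar M_i(x) = x - (\lambda - 1)h(x,\eta_i)\eta_i$, so in the $\eta$-basis the $j$-th column of $\bar M_i$ is $e_j - (\lambda-1)\overline{h(\eta_j,\eta_i)}\, e_i$; the conjugation on the scalar matters and must be tracked, using $\overline{(1-\lambda)^{-1}} = (1-\bar\lambda)^{-1}$ and the fact that $\lambda$ is a root of unity so $\bar\lambda = \lambda^{-1}$.

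Next I would carry out the matrix products in the prescribed order: first form $\bar B = \bar M_3 \bar M_2 \bar M_4 \bar M_3 \bar M_1 \bar M_5 \bar M_2 \bar M_4 \bar M_3$, then $\bar M_0' = \bar B \bar M_0 \bar B^{-1}$, then $\bar W = (\bar M_0 \bar M_0' \bar M_0)(\bar M_0' \bar M_0 \bar M_0')^{-1}$. Since $\bar M_0'$ is conjugate to $\bar M_0$, which is a complex reflection with reflection vector $\eta_0$ and $\eta_0' := \bar B(\eta_0)$, the product $\bar W$ acts trivially on the codimension-$2$ subspace orthogonal (for $h$) to both $\eta_0$ and $\eta_0'$ — this is the source of the eigenvalue $1$ with multiplicity $4$. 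It therefore suffices to understand $\bar W$ on the plane $\mathrm{span}(\eta_0, \eta_0')$: writing $\bar M_0$ and $\bar M_0'$ as $2\times 2$ matrices on this plane in terms of the single scalar $h(\eta_0, \eta_0')$ (computable from $\bar B$), $\bar W$ restricts to an $\SL_2$-type matrix whose trace is $-P(\lambda)$, whence the remaining two eigenvalues are the roots of $x^2 + P(\lambda)x + 1$. The content of the lemma is then exactly the identification of that trace with the stated rational function of $\lambda$.

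The main obstacle is bookkeeping, not conceptual: the word $B$ has length nine and the entries of $\bar B$ will be Laurent polynomials in $\lambda$ of moderate degree, so the computation of $h(\eta_0, \eta_0')$ — equivalently the relevant matrix entry of $\bar B$ — is where errors creep in, and the degree-$18$ numerator of $P(\lambda)$ reflects that this entry already has numerator degree around $9$ and gets "squared" when forming the trace of $\bar W$ on the plane. I would organize the computation by first reducing $\bar B$ modulo the relation $\sum_{i=0}^{k-1}\sigma^i = 0$ only where needed (actually this doesn't enter on $V$ directly), keeping all scalars as Laurent polynomials in $\lambda$, and verifying the palindromic symmetry of the numerator of $P(\lambda)$ (visible in the stated formula: the coefficient vector $(-1,3,-1,-8,13,1,-23,20,12,-34,12,20,-23,1,13,-8,-1,3,-1)$ is palindromic) as an internal consistency check — this symmetry is forced by the fact that $\bar W$ and its $h$-adjoint are conjugate, so $P(\lambda)$ must be invariant under $\lambda \mapsto \bar\lambda = \lambda^{-1}$, i.e. $P(\lambda) = P(1/\lambda)$, which pins down the palindrome. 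A final sanity check is to confirm that $\det \bar W = 1$ on the plane (forcing the constant term $+1$ in $x^2 + P(\lambda)x + 1$), which follows since each $\bar M_i$ has determinant $\lambda$ (reflection) so $\bar M_0 \bar M_0' \bar M_0$ and $\bar M_0' \bar M_0 \bar M_0'$ each have determinant $\lambda^3$, and these cancel in $\bar W$.
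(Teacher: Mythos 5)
The paper's proof is literally ``This follows by a direct computation in Mathematica''; your plan describes the same approach (build the $6\times 6$ matrices for $\bar M_0,\dots,\bar M_5$ from the $h$-pairings and the reflection formula, form the prescribed word, extract the characteristic polynomial), so methodologically you are aligned with the paper. The extra scaffolding you propose is genuinely useful and goes beyond what the paper records: the observation that $\bar W$ is a product of reflections in $\eta_0$ and $\eta_0' = \bar B(\eta_0)$, hence fixes a codimension-$2$ subspace pointwise, cleanly explains the eigenvalue $1$ with multiplicity $4$ without computing the full characteristic polynomial; the determinant argument ($\det \bar M_i = \lambda$, so the $\lambda^3$'s cancel in $W$) pins down the constant term $+1$; and the palindromicity of $P(\lambda)$ as a consistency check (forced by $P(\lambda) = P(\lambda^{-1})$, i.e. unitarity of $\bar W$ for $h$) is a good internal sanity test.

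Two slips in your setup, though, would throw the computation off if carried out verbatim. First, with the paper's normalization $h(\eta_i,\eta_i) = -1$, not $+1$: the proof of \Cref{prop:reflection} shows $h(\xi_\alpha,\xi_\alpha) = \tfrac{1}{k}(1-\lambda)(\bar\lambda - 1) < 0$, and $\eta_\alpha$ is obtained by rescaling by a positive real, so it is a \emph{negative} unit vector. With $+1$ you would get $\bar M_i(\eta_i) = (2-\lambda)\eta_i$ instead of the correct $\lambda\eta_i$ (compare \Cref{corollary:monformula}). Second, the $j$-th column of the matrix of $\bar M_i$ in the $\eta$-basis is the coordinate vector of $\bar M_i(\eta_j) = \eta_j - (\lambda-1)h(\eta_j,\eta_i)\eta_i$, i.e.\ $e_j - (\lambda-1)h(\eta_j,\eta_i)e_i$ with \emph{no} conjugate on the $h$-entry; writing $\overline{h(\eta_j,\eta_i)} = h(\eta_i,\eta_j)$ as you did swaps $(1-\lambda)^{-1}$ for $(1-\bar\lambda)^{-1}$ exactly where the order $i<j$ versus $j<i$ matters. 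Both errors would surface quickly when you check $\bar M_i(\eta_i) = \lambda\eta_i$ and the relations in \Cref{corollary:monformula}, but as written the plan encodes the wrong matrices.
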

\begin{proof}
    This follows by a direct computation in Mathematica - see \cite{supplemental}.
\end{proof}

\begin{lemma}
    Let $\lambda = e^{2 \pi i p/q}$ be a root of unity for $1/4 \le p/q \le 2/5$. Then $\bar W$ has infinite order. 
\end{lemma}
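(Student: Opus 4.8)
The plan is to reduce the question of infinite order to a statement about the two non-trivial eigenvalues of $\bar W$, namely the roots of $x^2 + P(\lambda)x + 1$, and then to show that for $\lambda = e^{2\pi i p/q}$ with $1/4 \le p/q \le 2/5$ these roots are not roots of unity. By \Cref{lemma:eigenvalues}, $\bar W$ has eigenvalues $1,1,1,1$ and the two roots $\mu, \mu^{-1}$ of $x^2 + P(\lambda)x + 1$ (the constant term being $1$ forces the product of the nontrivial eigenvalues to be $1$, so they are $\mu$ and $\mu^{-1}$). An automorphism of a finite-dimensional vector space has finite order if and only if it is diagonalizable with all eigenvalues roots of unity; so to prove $\bar W$ has infinite order it suffices to exhibit one eigenvalue that is not a root of unity. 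Equivalently, since $\mu + \mu^{-1} = -P(\lambda)$, it suffices to show that $-P(\lambda)$ is \emph{not} of the form $2\cos\theta$ for $\theta$ a rational multiple of $2\pi$ — or more crudely, that $-P(\lambda) \notin [-2,2]$, since then $\mu$ and $\mu^{-1}$ are real with $|\mu| \ne 1$, hence certainly not roots of unity and $\bar W$ has infinite order.

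\textbf{Key steps.} First I would substitute $\lambda = e^{2\pi i t}$ and simplify $P(\lambda)$. The polynomial in the numerator of $P(\lambda)$ visibly has palindromic/anti-palindromic symmetry (the coefficient list read forwards and backwards are negatives of each other up to sign — the degree-$18$ and constant coefficients are both $-1$, the $\lambda^{17}$ and $\lambda$ coefficients are both $3$, etc.), and dividing by $\lambda^9$ is exactly the operation that turns such a self-reciprocal polynomial into a real-valued function of $\cos(2\pi t)$ (or $\cos(k \cdot 2\pi t)$ for various $k$). Concretely, $\lambda^{-9}(\lambda^{18} + 1) = 2\cos(18\pi t)$ and similarly $\lambda^{-9}(\lambda^{17} + \lambda) = 2\cos(16\pi t)$, and so on, so $P(\lambda)$ becomes an explicit trigonometric polynomial $P(t)$ in $\cos(2\pi m t)$ for $m = 1,\dots,9$ — in particular it is real for $\lambda$ on the unit circle. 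The second step is to analyze $g(t) := -P(t)$ on the interval $t \in [1/4, 2/5]$ and show $g(t) > 2$ (or $g(t) < -2$) throughout — this puts the nontrivial eigenvalues outside the unit circle. This should be doable by estimating the trigonometric polynomial: bound each $2\cos(2\pi m t)$ term crudely on the interval, or (cleaner) evaluate $g$ at the endpoints and argue monotonicity or a one-sign-of-derivative condition on $[1/4,2/5]$, perhaps after noting that at $t = 1/4$ and $t=2/5$ the roots of unity $\lambda$ make $P(\lambda)$ an algebraic number one can compute exactly. A convenient shortcut: it suffices to check that $P(\lambda) \notin [-2,2]$, which for the \emph{boundary} values $p/q = 1/4$ and $p/q = 2/5$ is a finite exact computation, and for the interior one uses continuity plus the fact that $g(t) = 2$ has no solution in the open interval (no real eigenvalue collision), again an explicit finiteness check on the trigonometric polynomial $g(t) - 2$.

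\textbf{Main obstacle.} The delicate point is the interval analysis: I expect the cleanest route is not a brute-force estimate but rather to verify directly that $-P(\lambda)$, as a function of $t = p/q \in [1/4, 2/5]$, stays strictly outside $[-2,2]$, and this likely does come down to a Mathematica-style computation (indeed \cite{supplemental} is cited for \Cref{lemma:eigenvalues} and would presumably carry this too). If the referee wants a hand check, the fallback is: compute $P(e^{2\pi i/4}) = P(i)$ and $P(e^{4\pi i/5})$ exactly (the first is an integer since $i^9 = i$ and the numerator is a polynomial in $i$ with integer coefficients; the second lies in $\Q(\cos(2\pi/5))$, a real quadratic field), verify both have absolute value $> 2$, and then show $g(t) - 2$ and $g(t)+2$ have no zero in $(1/4, 2/5)$ by a Sturm-sequence / sign-change argument on the underlying polynomial (after the substitution $u = \cos(2\pi t/ \text{something})$ this is a genuine polynomial of modest degree). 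Either way, once $-P(\lambda) \notin [-2,2]$ is established, the conclusion is immediate: the nontrivial eigenvalues of $\bar W$ are real and off the unit circle, hence not roots of unity, hence $\bar W$ — and therefore $\rho'_\lambda(W)$ — has infinite order, as does $W$ in $\pi_1(\cP_d)$; combined with \Cref{lemma:wkernel} this shows $W$ is a nontrivial element of $K_d^{top}$ whose image under $\rho'_\lambda$ has infinite order, which is exactly \Cref{prop:infiniteimg} for this range of $\lambda$.
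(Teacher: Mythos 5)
Your proposal is correct and follows essentially the same route as the paper: reduce via \Cref{lemma:eigenvalues} to the two roots of $x^2 + P(\lambda)x + 1$, observe that if one of these were a root of unity then necessarily $\lvert P(\lambda)\rvert \le 2$, and then certify $\lvert P(\lambda)\rvert > 2$ on the interval $\theta \in [\pi/2, 4\pi/5]$ by a Mathematica computation. Your additional observation that the numerator of $P$ is palindromic (so $P(\lambda)$ is automatically real on the unit circle) is a nice clarifying remark, but the paper's argument doesn't need it — it deduces reality of $P(\lambda)$ a posteriori from the assumption that $\mu$ is a root of unity — and your sketch of a hand-verifiable fallback (endpoint evaluation plus a Sturm-sequence argument) is a reasonable alternative to the cited \cite{supplemental} computation.
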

\begin{proof}
    Following \Cref{lemma:eigenvalues}, it suffices to show that the roots of $x^2 + P(\lambda)x + 1$ are not roots of unity under the hypotheses on $\lambda$. Suppose to the contrary that some root $\mu$ of $x^2 + P(\lambda)x + 1$ is a root of unity. Then $\mu^{-1} = \bar \mu$ is the other root, and in particular, $P(\lambda) = -(\mu + \bar \mu) = -2 \Re(\mu)$ must be real, so that $\abs{P(\lambda)} \le 2$. But Mathematica shows that $P(e^{i \theta}) < -2$ on the interval $\theta \in [\pi/2, 4\pi/5]$ (again, see \cite{supplemental}).
\end{proof}

\subsection{Finishing the proof}
\begin{proof}[Proof of \Cref{main:large}]
    The work of Carlson-Toledo shows that $\rho_\lambda'(K_d^{alg}) \le \Aut(H_2(Y)_\lambda)$ is Zariski dense so long as the image is infinite and generated by complex reflections (see in particular the proof outline in \cite[Section 2]{CT}). They establish infinitude of the image for $\lambda = e^{2 \pi i / d}$ for all $d \ge 4$ in \cite[Proposition 5.1]{CT} via Hodge-theoretic methods; note that \Cref{prop:infiniteimg} gives a direct proof of this for $\lambda = e^{2 \pi i p/ q}$ with $1/4 \le p/q \le 2/5$. Generation by complex reflections was established for $k,d$ odd in \cite[Proposition 6.1]{CT} and in general in \Cref{prop:reflection} above. 

    Basic arithmetic shows that for all $d \ge 4$, there is some rational number $p/d$ (not necessarily in lowest terms) on the interval $[1/4, 2/5]$. Set $\lambda = e^{2 \pi i p/d}$ for such $p/d$. Arguing as in \cite[Section 2]{CT}, by passing to a finite-index subgroup $K' \le K_d^{hom}$, we can arrange for the image of $\rho_\lambda'(K')$ in the adjoint representation $\bar{\Aut(H_2(Y)_\lambda)}$ to lie in the identity component $\bar{\Aut(H_2(Y)_\lambda)}_0$ in the Zariski topology. For notational simplicity, following Carlson-Toledo, define $\bar G ':= \bar{\Aut(H_2(Y)_\lambda)}_0$, and define $\bar \rho': K' \to \bar G'$ as the induced representation.
    
    Carlson-Toledo then show that $\bar \rho'(K')$ is Zariski dense in the simple algebraic group $\bar G '$. By \Cref{prop:infiniteimg}, the image $\bar \rho '(K' \cap K_d^{top}) \le \bar G'$ is infinite and normal in $\bar \rho'(K')$. As $\bar G'$ is simple, it follows that $\bar \rho' (K' \cap K_d^{top})$ is likewise Zariski dense in $\bar G'$, as was to be shown.
\end{proof}

\bibliographystyle{alpha}
\bibliography{bibliography}

\begin{thebibliography}{AGZV12}

\bibitem[AGZV12]{AGZV}
V.~I. Arnold, S.~M. Gusein-Zade, and A.~N. Varchenko.
\newblock {\em Singularities of differentiable maps. {V}olume 2}.
\newblock Modern Birkh\"{a}user Classics. Birkh\"{a}user/Springer, New York,
  2012.
\newblock Monodromy and asymptotics of integrals, Translated from the Russian
  by Hugh Porteous and revised by the authors and James Montaldi, Reprint of
  the 1988 translation.

\bibitem[BS24]{nickishan}
I.~Banerjee and N.~Salter.
\newblock Monodromy and vanishing cycles for complete intersection curves,
  2024.

\bibitem[CL18]{CL1}
R.~Cr\'etois and L.~Lang.
\newblock The vanishing cycles of curves in toric surfaces {I}.
\newblock {\em Compos. Math.}, 154(8):1659--1697, 2018.

\bibitem[CL19]{CL2}
R.~Cr\'etois and L.~Lang.
\newblock The vanishing cycles of curves in toric surfaces {II}.
\newblock {\em J. Topol. Anal.}, 11(4):909--927, 2019.

\bibitem[CS21]{strata2}
A.~Calderon and N.~Salter.
\newblock Higher spin mapping class groups and strata of abelian differentials
  over {T}eichm\"uller space.
\newblock {\em Adv. Math.}, 389:Paper No. 107926, 56, 2021.

\bibitem[CS23]{strata3}
A.~Calderon and N.~Salter.
\newblock Framed mapping class groups and the monodromy of strata of abelian
  differentials.
\newblock {\em J. Eur. Math. Soc. (JEMS)}, 25(12):4719--4790, 2023.

\bibitem[CT99]{CT}
J.~Carlson and D.~Toledo.
\newblock Discriminant complements and kernels of monodromy representations.
\newblock {\em Duke Math. J.}, 97(3):621--648, 1999.

\bibitem[FM12]{FM}
B.~Farb and D.~Margalit.
\newblock {\em A primer on mapping class groups}, volume~49 of {\em Princeton
  Mathematical Series}.
\newblock Princeton University Press, Princeton, NJ, 2012.

\bibitem[Har21]{reid}
R.~Harris.
\newblock The kernel of the monodromy of the universal family of degree {$d$}
  smooth plane curves.
\newblock {\em J. Topol. Anal.}, 13(2):575--589, 2021.

\bibitem[Kun08]{kuno}
Y.~Kuno.
\newblock The mapping class group and the {M}eyer function for plane curves.
\newblock {\em Math. Ann.}, 342(4):923--949, 2008.

\bibitem[Kun11]{kuno2}
Y.~Kuno.
\newblock The {M}eyer functions for projective varieties and their application
  to local signatures for fibered 4-manifolds.
\newblock {\em Algebr. Geom. Topol.}, 11(1):145--195, 2011.

\bibitem[Mey73]{meyer}
W.~Meyer.
\newblock Die {S}ignatur von {F}l\"achenb\"undeln.
\newblock {\em Math. Ann.}, 201:239--264, 1973.

\bibitem[PCS21]{nickpablo}
P.~Portilla~Cuadrado and N.~Salter.
\newblock Vanishing cycles, plane curve singularities and framed mapping class
  groups.
\newblock {\em Geom. Topol.}, 25(6):3179--3228, 2021.

\bibitem[Sal16]{planequintics}
N.~Salter.
\newblock On the monodromy group of the family of smooth plane curves, 2016.

\bibitem[Sal19]{saltertoric}
N.~Salter.
\newblock Monodromy and vanishing cycles in toric surfaces.
\newblock {\em Invent. Math.}, 216(1):153--213, 2019.

\bibitem[Sal24]{supplemental}
N.~Salter.
\newblock Supplemental {M}athematica notebook.
\newblock \url{https://github.com/nick-salter-math/monker}, 2024.

\bibitem[Waj99]{wajnryb}
B.~Wajnryb.
\newblock Artin groups and geometric monodromy.
\newblock {\em Invent. Math.}, 138(3):563--571, 1999.

\end{thebibliography}

\end{document}